\theoremstyle{plain}
\newtheorem{thm}{Theorem}[section]
\newtheorem*{thm*}{Theorem}
\newtheorem*{cor*}{Corollary}
\newtheorem{prop}[thm]{Proposition}
\newtheorem{lem}[thm]{Lemma}
\newtheorem{cor}[thm]{Corollary}
\newtheorem*{claim*}{Claim}
\theoremstyle{definition}
\newtheorem{defn}[thm]{Definition}
\newtheorem{ex}[thm]{Example}
\newtheorem{rem}[thm]{Remark}
\theoremstyle{remark}
\numberwithin{equation}{thm}
\def\Ker{\mathrm{Ker}}
\def\Im{\mathrm{Im}}
\def\e{\mathrm{e}}
\def\m{\mathfrak m}
\def\n{\mathfrak n}
\def\p{\mathfrak p}
\def\q{\mathfrak q}
\def\Z{\Bbb Z}
\newcommand{\rmG}{\mathrm{G}}
\newcommand{\rmR}{\mathrm{R}}
\newcommand{\rmS}{\mathrm{S}}
\def\depth{\mathrm{depth}}
\def\Ann{\mathrm{Ann}}
\def\Ass{\mathrm{Ass}}
\newcommand{\mapright}[1]{%
\smash{\mathop{%
\hbox to 1cm{\rightarrowfill}}\limits^{#1}}}
\newcommand{\mapleft}[1]{%
\smash{\mathop{%
\hbox to 1cm{\leftarrowfill}}\limits_{#1}}}
\newcommand{\mapdown}[1]{\Big\downarrow
\llap{$\vcenter{\hbox{$\scriptstyle#1\,$}}$ }}
\begin{document}

\setlength{\baselineskip}{12pt}
\title{The first Hilbert coefficient of stretched ideals}
\pagestyle{plain}
\author{Kazuho Ozeki}
\address{Department of Mathematical Sciences, Faculty of Science, Yamaguchi University, 1677-1 Yoshida, Yamaguchi 753-8512, Japan}
\email{ozeki@yamaguchi-u.ac.jp}
\thanks{{\it 2020 Mathematics Subject Classification:}
Primary: 13A30, Secondary: 13H10, 13D40.
\endgraf
{\it Key words and phrases:}
stretched local ring, stretched ideal, Cohen-Macaulay local ring, associated graded ring, Hilbert function, first Hilbert coefficient
\endgraf
This is not in final form. The final/detailed version will be submitted to elsewhere for publication.}
\maketitle
\begin{abstract}
In this paper we explore the almost Cohen-Macaulayness of the associated graded ring of stretched $\m$-primary ideals with small first Hilbert coefficient in a Cohen-Macaulay local ring $(A,\m)$.
In particular, we explore the structure of stretched $\m$-primary ideals satisfying the equality $\e_1(I)=\e_0(I)-\ell_A(A/I)+4$ where $\e_0(I)$ and $\e_1(I)$ denote the multiplicity and the first Hilbert coefficient respectively.
\end{abstract}


\section{Introduction}

Throughout this paper, let $A$ be a Cohen-Macaulay local ring with maximal ideal $\m$ and $d=\dim A>0.$ 
For simplicity, we may assume the residue class field $A/\m$ is infinite.
Let $I$ be an $\m$-primary ideal in $A$ and $Q=(a_1,a_2,\ldots,a_d) \subseteq I$ be a parameter ideal in $A$ which forms a reduction of $I$.
Let 
$$ R=\rmR(I) := A[I t] \ \ \ \subseteq A[t] ~~\operatorname{and}~~ \ R'= \rmR'(I):= A[It,t^{-1}] \ \ \subseteq A[t,t^{-1}]$$
denote, respectively, the Rees algebra and the extended Rees algebra of $I$. 
Let
$$ G=\rmG(I):= R'/t^{-1}R' \cong \bigoplus_{n \geq 0}I^n/I^{n+1}$$
denote the associated graded ring of $I$.
Let $\ell_A(N)$ denote, for an $A$-module $N$, the length of $N$.

\vspace{2mm}

It is well known that the multiplicity $\e_0(\m)$ of a Cohen-Macaulay local ring $(A, \m)$ is written as $\e_0(\m) = \mu(\m)-d+k_{\m}$ for some integer $k_{\m} \geq 1$, where $\mu(\m)$ denotes the embedding dimension of $A$ $($c.f. \cite{A}$)$.
Sally investigated Cohen-Macaulay local rings with $k_{\m}=1$ $($resp. $k_{\m}=2)$ which have been called {\it rings of minimal multiplicity} $($resp. {\it rings of almost minimal multiplicity}$)$.
In \cite{S1}, Sally showed that the associated graded ring ${\rm G}(\m)$ of the maximal ideal $\m$ is always Cohen-Macaulay if $k_{\m}=1$.
In \cite{S3}, she proved that the associated graded ring ${\rm G}(\m)$ is Cohen-Macaulay if $k_{\m}=2$ and $\tau(A) < \mu(\m)-d$ where $\tau(A)$ denotes the Cohen-Macaulay type of $A$, while she found examples of Cohen-Macaulay local rings with $k_{\m}=2$ whose associated graded ring ${\rm G}(\m)$ is not Cohen-Macaulay.
Based on these considerations, Sally expected that the associated graded ring ${\rm G}(\m)$ is almost Cohen-Macaulay, i.e. $\depth {\rm G}(\m) \geq d-1$, if $k_{\m}=2$.
This {\it Sally's conjecture} was solved by Rossi-Valla \cite{RV1}, and independently Wang \cite{W}.
Thereafter, Rossi and Valla \cite{RV2} examined the almost Cohen-Macaulay property of the associated graded rings with $k_{\m}=3$, however the problem is still open.

\vspace{2mm}

The notion of {\it stretched} Cohen-Macaulay local rings was introduced by J. Sally to extend the rings of almost minimal multiplicity.
We say that the ring $A$ is {\it stretched} if $\ell_A(Q+\m^2/Q+\m^3)=1$ holds true, i.e. the ideal $(\m/Q)^2$ is principal, for some parameter ideal $Q$ in $A$ which forms a reduction of $\m$ $($\cite{S2}$)$.
We note here that this condition depends on the choice of a reduction $Q$ $($see \cite[Example 2.3]{RV3}$)$.
Sally \cite{S2} showed that the equality $\m^{k_{\m}+1}=Q\m^{k_{\m}}$ holds true if and only if the associated graded ring ${\rmG}(\m)$ of $\m$ is Cohen-Macaulay if $A$ is stretched.

These arguments work for $\m$-primary ideals in Cohen-Macaulay local rings.
One knows that, for a given $\m$-primary ideal $I$, the equality $\e_0(I)=\ell_A(I/I^2)-(d-1)\ell_A(A/I)+k_I-1$ holds true for some integer $k_I \geq 1$ where $\e_0(I)$ denotes the multiplicity of $I$.
We see that the associated graded ring $G$ of $I$ is Cohen-Macaulay if $k_I=1$ $($c.f. \cite{VV}$)$, and $G$ is almost Cohen-Macaulay if $k_I=2$ $($c.f. \cite{R, RV1, W}$)$.

In 2001, Rossi and Valla \cite{RV3} gave the notion of stretched $\m$-primary ideals.
We say that the $\m$-primary ideal $I$ is stretched if the following two conditions
\begin{itemize}
\item[$(1)$] $Q \cap I^2=QI$ and
\item[$(2)$] $\ell_A(Q+I^2/Q+I^3)=1$
\end{itemize}
hold true for some parameter ideal $Q$ in $A$ which forms a reduction of $I$.
We notice that the first condition is naturally satisfied if $I=\m$ so that this extends the classical definition of stretched local rings given in \cite{S2}.

Throughout this paper, a stretched $\m$-primary ideal $I$ will come always equipped with a parameter ideal $Q$ in $A$ which forms a reduction of $I$ such that $Q \cap I^2=QI$ and $\ell_A(I^2+Q/I^3+Q)=1$.
Rossi and Valla \cite{RV3} showed that the equality $I^{{k_I}+1}=QI^{k_I}$ holds true if and only if the associated graded ring $G$ of $I$ is Cohen-Macaulay if $I$ is stretched.
They also showed that the associated graded ring $G$ of $I$ is almost Cohen-Macaulay if $I$ is stretched and $I^{k_I+1} \subseteq QI^{k_I-1}$ holds true $($\cite[Proposition 3.1]{RV3}$)$. 
Moreover, Mantelo and Xie \cite{MX} introduced the notion of $j$-stretched ideals and generalized the results of Sally and Rossi-Valla.
An interesting result on the depth of fiber cones of stretched $\m$-primary ideals is presented in \cite{JN}.
Recently, stretched $\m$-primary ideals with small reduction number were explored in \cite{O3}.

\vspace{2mm}

The first Hilbert coefficient of $\m$-primary ideal in a local ring is an important numerical invariant associated to an ideal.
In this paper we explore the first Hilbert coefficient of stretched $\m$-primary ideals.

As is well known, for a given $\m$-primary ideal $I$, there exist integers $\{\e_k(I)\}_{0 \leq k \leq d}$ such that the equality
$$\ell_A(A/I^{n+1})={\e}_0(I)\binom{n+d}{d}-{\e_1}(I)\binom{n+d-1}{d-1}+\cdots+(-1)^d{\e}_d(I)$$
holds true for all integers $n \gg 0$.
For each $0 \leq k \leq d$, $\e_k(I)$ is called the $k$-th {\it Hilbert coefficient} of $I$, and 
especially, the leading coefficient $\e_0(I)$ is the multiplicity of $I$.
It is also well known that the inequality
$$ \e_1(I) \geq \e_0(I)-\ell_A(A/I) $$
holds true for any $\m$-primary ideal $I$ $($\cite{N}$)$. 
The equality $\e_1(I)=\e_0(I)-\ell_A(A/I)$ holds ture if and only if $I^2=QI$ is satisfied, and when this the case the associated graded ring $G$ of $I$ is Cohen-Macaulay by \cite{Hun,O}.
The complete structure theorem of the associated graded ring $G$ of $I$ satisfying the equality $\e_1(I)=\e_0(I)-\ell_A(A/I)+1$ was given in \cite{GNO}.
We see that the associated graded ring $G$ of $I$ is almost Cohen-Macaulay, if $\e_1(I)=\e_0(I)-\ell_A(A/I)+2$ and $Q \cap I^2=QI$ hold true $($see \cite[Theorem 4.6]{RV4}$)$.
In this paper we will show that the associated graded ring $G$ of $I$ is almost Cohen-Macaulay if $I$ is stretched and the equality $\e_1(I)=\e_0(I)-\ell_A(A/I)+3$ holds true $($see Proposition \ref{rank3}$)$.

The main purpose of this paper is to explore the structure of the stretched $\m$-primary ideals with the equality $\e_1(I)=\e_0(I)-\ell_A(A/I)+4$ which is the content of Section 5.
We present in this case the Hilbert coefficients and series, and in particular, we prove that $\depth G = d-1$ $($Theorem \ref{rank4}$)$.

The almost Cohen-Macaulayness of the associated graded ring of stretched $\m$-primary ideals with small first Hilbert coefficient will be given as follows.

\begin{cor}[Corollary \ref{cor4}]
Suppose that $I$ is stretched and assume that $\e_1(I) \leq \e_0(I)-\ell_A(A/I)+4$, then $G$ is almost Cohen-Macaulay $($i.e. $\depth G \geq d-1)$.
\end{cor}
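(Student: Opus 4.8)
The plan is to reduce the statement to a finite case analysis governed by the single non-negative integer
$$ n := \e_1(I) - \e_0(I) + \ell_A(A/I). $$
By Northcott's inequality \cite{N} we have $n \geq 0$, and the hypothesis $\e_1(I) \leq \e_0(I) - \ell_A(A/I) + 4$ is precisely the condition $n \leq 4$, so $n \in \{0,1,2,3,4\}$. The key structural fact I would record at the outset is that, because $I$ is stretched, condition $(1)$ in the definition of a stretched ideal already supplies the intersection property $Q \cap I^2 = QI$; this is exactly the extra hypothesis demanded by the low-rank results quoted below, so stretchedness makes all of them available simultaneously.

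First I would dispose of the two smallest cases. When $n = 0$ the equality $\e_1(I) = \e_0(I) - \ell_A(A/I)$ forces $I^2 = QI$, and then $G$ is Cohen-Macaulay by \cite{Hun, O}; in particular $\depth G = d \geq d-1$. When $n = 1$ the complete structure theorem of \cite{GNO} applies and yields $\depth G \geq d-1$ directly, independently of the stretched hypothesis.

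Next I would handle the remaining three cases by quoting the results accumulated earlier in the paper. For $n = 2$, since $Q \cap I^2 = QI$ holds by stretchedness, \cite[Theorem 4.6]{RV4} gives that $G$ is almost Cohen-Macaulay. For $n = 3$, the stretched hypothesis together with Proposition \ref{rank3} yields $\depth G \geq d-1$. Finally, for $n = 4$, Theorem \ref{rank4} gives the sharp conclusion $\depth G = d-1$. Collecting all five cases we obtain $\depth G \geq d-1$, i.e. $G$ is almost Cohen-Macaulay, as claimed.

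The step I expect to require the most care is not any single computation but the bookkeeping of hypotheses: one must verify that the stretched condition is genuinely strong enough to license every cited result — in particular that the intersection property $Q \cap I^2 = QI$ needed in the case $n=2$ is automatic here, and that the structure theorem for $n=1$ does in fact certify almost Cohen-Macaulayness rather than merely recording the Hilbert series. Once these compatibilities are confirmed, the corollary is an immediate assembly of Proposition \ref{rank3}, Theorem \ref{rank4}, and the three external inputs, with no further estimation needed.
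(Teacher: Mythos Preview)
Your proof is correct and follows essentially the same route as the paper: a case analysis on $n=\e_1(I)-\e_0(I)+\ell_A(A/I)\in\{0,\dots,4\}$, invoking the appropriate structural result for each value and citing Proposition~\ref{rank3} and Theorem~\ref{rank4} for $n=3,4$. The only cosmetic differences are that the paper handles $n=1,2$ via its own Corollaries~\ref{rank1} and~\ref{rank2} rather than the external references \cite{GNO} and \cite[Theorem~4.6]{RV4}, and that the case $n=0$ is in fact vacuous here (the Remark opening Section~5 records that stretchedness already forces $\e_1(I)\geq\e_0(I)-\ell_A(A/I)+1$).
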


Let us briefly explain how this paper is organized.
In Section 2, we will summarize some auxiliary results on the stretched $\m$-primary ideals.
In Section 3, we will introduce some techniques of the Sally module $S={\rmS}_Q(I)=I{\rmR}(I)/I{\rm R}(Q)$ for computing the Hilbert coefficients of $I$, some of them are stated in a general setting. 
In particular, the graded module $S^{(2)}$ will be the key ingredient for proving the main theorem.
In Section 4, we shall discuss the first Hilbert coefficient of stretched $\m$-primary ideals. 
In Section 5, we shall explore the stretched $\m$-primary ideals with small first Hilbert coefficient, and prove our main theorem.
In Section 6, we will introduce some examples of stretched local rings whose maximal ideal satisfying conditions of the main theorem.


\section{Preliminary Steps}

The purpose of this section is to summarize some results on the structure of the stretched $\m$-primary ideals, which we need throughout this paper.

Recall that, for a given $\m$-primary ideal $I$, the equality $\e_0(I)=\ell_A(I/I^2)-(d-1)\ell_A(A/I)+k_I-1$ holds true for some integer $k_I \geq 1$.
We set $r_{I}=r_Q(I):=\min\{n \geq 0 \ | \ I^{n+1}=QI^n\}$ denotes the reduction number of $I$ with respect to $Q$.

Let us begin with the following remark.

\begin{rem}
The following assertions hold true.
\begin{itemize}
\item[$(1)$] We notice that $\ell_A(I^2/QI)=\e_0(I)+(d-1)\ell_A(A/I)-\ell_A(I/I^2)=k_I-1$ holds true (see for instance \cite{RV4}), so that  $\ell_A(I^2/QI)$ and $k_I$ do not depend on a choice of minimal reduction $Q$ of $I$.
\item[$(2)$] Suppose that $I$ is stretched. 
Then the equality $k_I=n_I$ holds true where $n_I=\min\{n \geq 0 \ | \ I^{n+1} \subseteq Q \}$ denotes the index of nilpotency of $I$ with respect to $Q$ $($see Lemma \ref{length}$)$. 
Therefore it is easy to see that the inequality $k_I \leq r_I$ holds true for any stretched $\m$-primary ideals $I$.
\end{itemize}
\end{rem}

We have the following lemma which was given by Rossi and Valla.

\begin{lem}{$($\cite[Lemma 2.4]{RV1}$)$}\label{powers}
Suppose that $I$ is stretched.
Then we have the following.
\begin{itemize}
\item[$(1)$] There exist $x, y \in I \backslash Q$ such that $I^{n+1}=QI^n+(x^ny)$ holds true for all $n \geq 1$.
\item[$(2)$] The map $$ I^{n+1}/QI^n \overset{\widehat{x}}{\to}I^{n+2}/QI^{n+1} $$ is surjective for all $n \geq 1$. Therefore $\ell_A(I^{n+1}/QI^n) \geq \ell_A(I^{n+2}/QI^{n+1})$ for all $n \geq 1$.
\item[$(3)$] $\m x^{n}y \subseteq QI^{n}+I^{n+2}$ and hence $\ell_A(I^{n+1}/QI^n+I^{n+2}) \leq 1$ for all $n \geq 1$.
\end{itemize}
\end{lem}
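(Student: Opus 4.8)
The plan is to reduce the entire lemma to its first assertion in the case $n=1$, namely the existence of $x,y\in I\setminus Q$ with $I^2 = QI + (xy)$, and then to deduce everything else by formal induction. Throughout I would pass to the Artinian reduction $B=A/Q$: since $Q$ is a reduction of the $\m$-primary ideal $I$ it is generated by a system of parameters, so $A$ Cohen--Macaulay forces $B$ to be Artinian. Writing $J=IB$ for the image of $I$, the ideal $J$ is nilpotent, $Q\cap I^2=QI$ identifies $I^2/QI$ with $J^2$, and the second stretchedness condition becomes $\ell_A(J^2/J^3)=1$.

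First I would produce the product. Because $\mathrm{gr}_J(B)=\bigoplus_{n\ge0}J^n/J^{n+1}$ is generated in degree one over $B/J=A/I$, its degree-two component $J^2/J^3$ is spanned by products $x^{\ast}y^{\ast}$ of degree-one elements; since $\ell_A(J^2/J^3)=1$, at least one such product is nonzero and hence generates $J^2/J^3$. Fixing the corresponding $x,y\in I$ (necessarily in $I\setminus Q$, as $x^{\ast}y^{\ast}\neq0$ forces $x^{\ast},y^{\ast}\neq0$), we get $J^2=\overline{xy}\,B+J^3$; feeding this into $J^3=J\cdot J^2$ and iterating, using $J^N=0$ for $N\gg0$, collapses it to $J^2=\overline{xy}\,B$ by a descending Nakayama-type argument. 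Intersecting the equality $I^2+Q=(xy)+Q$ back down with $I^2$ and using $Q\cap I^2=QI$ yields the base case $I^2=QI+(xy)$. I expect this to be the main obstacle: the whole lemma rests on realizing the unique new generator of $I^2$ over $QI$ as an honest product of two elements of $I$, and this is exactly where the length-one hypothesis and the degree-one generation of $\mathrm{gr}_J(B)$ are needed.

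Granting $I^2=QI+(xy)$, the higher cases of (1) follow by induction inside $A$, the two inputs being $Ix\subseteq I^2=QI+(xy)$ and $y^2\in I^2=QI+(xy)$. Assuming $I^{n+1}=QI^n+(x^ny)$, I would compute
\[
 I^{n+2}=I\cdot I^{n+1}=QI^{n+1}+(Ix)\,x^{n-1}y\subseteq QI^{n+1}+(x^ny^2),
\]
and then rewrite $x^ny^2\in x^n\big(QI+(xy)\big)\subseteq QI^{n+1}+(x^{n+1}y)$, giving $I^{n+2}=QI^{n+1}+(x^{n+1}y)$ and closing the induction. Assertion (2) is then immediate: from $xI^{n+1}=x\big(QI^n+(x^ny)\big)=QxI^n+(x^{n+1}y)$ together with $I^{n+2}=QI^{n+1}+(x^{n+1}y)$ one reads off $I^{n+2}=QI^{n+1}+xI^{n+1}$, so multiplication by $x$ is surjective, and the length inequality follows because a surjection cannot increase length.

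For (3) I would first note that the case $n=1$ is essentially the hypothesis: since $I^2/(QI+I^3)\cong (Q+I^2)/(Q+I^3)$ has length one and is generated by the image of $xy$, that generator is annihilated by $\m$, i.e. $\m xy\subseteq QI+I^3$. The general case then follows by multiplying through by $x^{n-1}\in I^{n-1}$:
\[
 \m x^ny=x^{n-1}(\m xy)\subseteq x^{n-1}(QI+I^3)\subseteq QI^n+I^{n+2}.
\]
Combined with $I^{n+1}=QI^n+(x^ny)$ from (1), this exhibits $I^{n+1}/(QI^n+I^{n+2})$ as a cyclic module whose generator is killed by $\m$, so it has length at most one. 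Once the base case of the second paragraph is in hand, all the remaining assertions are thus formal manipulations with the reduction $Q$.
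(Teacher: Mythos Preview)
The paper does not supply its own proof of this lemma; it simply records the statement and attributes it to Rossi--Valla. Your argument is correct and is essentially the standard one: pass to the Artinian quotient $B=A/Q$, use $\ell_A(J^2/J^3)=1$ together with degree-one generation of $\mathrm{gr}_J(B)$ to realize a generator of $J^2/J^3$ as a product $\overline{xy}$, apply Nakayama (valid since $J$ is nilpotent in $B$) to get $J^2=\overline{xy}\,B$, and then lift via $Q\cap I^2=QI$. The inductive step for (1), the surjectivity in (2), and the annihilation statement (3) are all handled cleanly; in particular, your use of $Ix\subseteq QI+(xy)$ and $y^2\in QI+(xy)$ to close the induction, and of $\m xy\subseteq QI+I^3$ propagated by $x^{n-1}$ for (3), are exactly the right moves. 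Nothing is missing.
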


We set $$\Lambda:=\Lambda_I=\Lambda_Q(I)=\{n \geq 1 \ | \ QI^{n-1}\cap I^{n+1}/QI^n \neq (0) \}$$
and $|\Lambda|$ denotes the cardinality of the set $\Lambda$.

The following lemma seems well known, but let us give a proof of it for the sake of completeness.

\begin{lem}\label{length}
Suppose that $I$ is stretched.
Then we have the following.
\begin{itemize}
\item[$(1)$] $\ell_A(I^{n+1}+Q/Q)=k_I-n$ for $1 \leq n \leq k_I$. Hence, especially, $\ell_A(I^2/QI)=\ell_A(I^2+Q/Q)=k_I-1$.
\item[$(2)$] For $n \geq 2$, 
\[\ell_A(I^{n+1}/QI^n)=\left\{
\begin{array}{ll}
\ell_A(I^n/QI^{n-1})-1 & \quad \mbox{if $n \notin \Lambda$,} \\
\ell_A(I^n/QI^{n-1}) & \quad \mbox{if $n \in \Lambda$.}
\end{array}
\right.\]
\end{itemize}
\end{lem}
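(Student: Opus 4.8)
The plan is to prove the two parts independently, in each case reducing everything to length-$\le 1$ control supplied by Lemma~\ref{powers}. For part (1), I would first note that $QI^m\subseteq Q$, so Lemma~\ref{powers}(1) gives $I^{m+1}+Q=Q+(x^my)$ for every $m\ge 1$; hence each successive quotient $(I^{m+1}+Q)/(I^{m+2}+Q)$ is cyclic on the image of $x^my$, and since $\m x^my\subseteq QI^m+I^{m+2}\subseteq Q+I^{m+2}$ by Lemma~\ref{powers}(3), it is annihilated by $\m$ and so has length at most $1$. The next step is a propagation fact: if $I^{m+1}+Q=I^{m+2}+Q$, then multiplying $I^{m+1}\subseteq I^{m+2}+Q$ by $I$ gives $I^{m+2}\subseteq I^{m+3}+QI\subseteq I^{m+3}+Q$, whence $I^{m+2}+Q=I^{m+3}+Q$; that is, once a successive quotient vanishes it vanishes forever. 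Writing $n_I$ for the index of nilpotency (the least $m$ with $I^{m+1}\subseteq Q$), positivity of the chain below $n_I$ together with the vanishing-propagates property pins every successive quotient below $n_I$ to length exactly $1$, giving $\ell_A((I^{n+1}+Q)/Q)=n_I-n$ for $1\le n\le n_I$. Finally I would identify $n_I=k_I$: at $n=1$ the hypothesis $Q\cap I^2=QI$ yields $(I^2+Q)/Q\cong I^2/QI$, whose length is $k_I-1$ by the equality $\ell_A(I^2/QI)=k_I-1$ recorded above, while the displayed formula gives $n_I-1$. Thus $n_I=k_I$, and part (1) follows.

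For part (2), fix $2\le n\le r_I$ and abbreviate $M_n=I^n/QI^{n-1}$. The inclusion $I^{n+1}\subseteq I^n$ (with $QI^n\subseteq QI^{n-1}$) induces a natural map $M_{n+1}=I^{n+1}/QI^n\to I^n/QI^{n-1}=M_n$ whose kernel is $(QI^{n-1}\cap I^{n+1})/QI^n$ and whose image is $(I^{n+1}+QI^{n-1})/QI^{n-1}=xM_n$. Splicing this short exact sequence with $0\to (I^{n+1}+QI^{n-1})/QI^{n-1}\to M_n\to I^n/(I^{n+1}+QI^{n-1})\to 0$ produces the length identity
\[
\ell_A(M_{n+1})-\ell_A(M_n)=\alpha_n-\beta_n,
\]
where $\alpha_n:=\ell_A((QI^{n-1}\cap I^{n+1})/QI^n)$ and $\beta_n:=\ell_A(I^n/(I^{n+1}+QI^{n-1}))$. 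By definition $n\in\Lambda$ precisely when $\alpha_n\ge 1$. Two short computations finish the argument: $\beta_n\le 1$ because $I^n/(I^{n+1}+QI^{n-1})$ is cyclic on $\overline{x^{n-1}y}$ and $\m$-annihilated by Lemma~\ref{powers}(3), and in fact $\beta_n=1$ whenever $M_n\ne 0$, since $\beta_n=0$ forces $xM_n=M_n$ and hence $M_n=0$ by Nakayama. Since Lemma~\ref{powers}(2) gives $\ell_A(M_{n+1})\le\ell_A(M_n)$, the identity forces $\alpha_n\le\beta_n=1$, so $\alpha_n\in\{0,1\}$; reading off the cases $n\in\Lambda$ ($\alpha_n=1$, difference $0$) and $n\notin\Lambda$ ($\alpha_n=0$, difference $-1$) is exactly the claimed dichotomy. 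The formula of part (2) should be understood in the effective range $n\le r_I$, as all the modules in sight vanish for $n>r_I$.

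The main obstacle I anticipate is not a single deep step but the disciplined bookkeeping: pinning every relevant successive quotient to length $\le 1$ via Lemma~\ref{powers}(3), and certifying $\beta_n=1$ through the Nakayama argument on $xM_n=M_n$. The identification $n_I=k_I$ in part (1) is the other place where the stretched hypotheses $Q\cap I^2=QI$ and $\ell_A((I^2+Q)/(I^3+Q))=1$ enter essentially, the latter through its encoding in Lemma~\ref{powers} and the equality $\ell_A(I^2/QI)=k_I-1$.
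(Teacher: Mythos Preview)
Your proposal is correct and follows the same approach as the paper: both parts hinge on the length-$\le 1$ control from Lemma~\ref{powers}(3), and your identity $\ell_A(M_{n+1})-\ell_A(M_n)=\alpha_n-\beta_n$ is exactly the paper's length decomposition in part~(2). You are somewhat more explicit than the paper---the propagation argument and the identification $n_I=k_I$ in part~(1), and the Nakayama step for $\beta_n=1$ in part~(2)---but these fill in steps the paper leaves implicit rather than constituting a different route.
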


\begin{proof}
$(1)$: Since $I^{k+1}+Q/I^{k+2}+Q \cong I^{k+1}/Q \cap I^{k+1}+I^{k+2}$ and 
$$0 \neq \ell_A(I^{k+1}+Q/I^{k+2}+Q)=\ell_A( I^{k+1}/Q \cap I^{k+1}+I^{k+2}) \leq \ell_A(I^{k+1}/QI^k+I^{k+2}) \leq 1$$
by Lemma \ref{powers} $(3)$, we have $\ell_A(I^{k+1}+Q/I^{k+2}+Q)=1$ for all $n \leq k \leq k_I-1$.
Hence, we have $\ell_A(I^{n+1}+Q/Q)= \sum_{k=n}^{k_I-1}\ell_A(I^{k+1}+Q/I^{k+2}+Q)=k_I-n$
as required.

\noindent
$(2)$: It is enough to show the case where $2 \leq n \leq r_I-1$.
For all $2 \leq n \leq r_I-1$, we have $\ell_A(I^{n}/QI^{n-1}+I^{n+1})=1$ by Lemma \ref{powers} $(3)$ and $QI^{n-1}+I^{n+1}/QI^{n-1} \cong I^{n+1}/QI^{n-1} \cap I^{n+1}$ so that the equalities 
\begin{eqnarray*}
\ell_A(I^{n+1}/QI^{n}) &=& \ell_A(I^{n+1}/QI^{n-1} \cap I^{n+1})+\ell_A(QI^{n-1} \cap I^{n+1}/QI^{n})\\
&=& \ell_A(QI^{n-1}+I^{n+1}/QI^{n-1})+\ell_A(QI^{n-1} \cap I^{n+1}/QI^{n})\\
&=& \{\ell_A(I^{n}/QI^{n-1})-\ell_A(I^{n}/QI^{n-1}+I^{n+1})\}+\ell_A(QI^{n-1} \cap I^{n+1}/QI^{n})\\
&=& \ell_A(I^{n}/QI^{n-1})-1+\ell_A(QI^{n-1} \cap I^{n+1}/QI^{n})
\end{eqnarray*}
hold true.
Then since $\ell_A(QI^{n-1} \cap I^{n+1}/QI^{n}) = 1+\ell_A(I^{n+1}/QI^{n})-\ell_A(I^{n}/QI^{n-1}) \leq 1$ by Lemma \ref{powers} $(2)$, we get the required condition.
\end{proof}

We furthermore have the following proposition.

\begin{prop}\label{sum}{$($\cite[Proposition 2.3]{O3}$)$}
Suppose that $I$ is stretched.
Then the following assertions hold true.
\begin{itemize}
\item[$(1)$] $|\Lambda|=r_I-k_I$ and
\item[$(2)$] $\displaystyle  \sum_{n \geq 1}\ell_A(I^{n+1}/QI^n)=\binom{r_I}{2}-\sum_{s \in \Lambda}s+|\Lambda| .$
\end{itemize}
\end{prop}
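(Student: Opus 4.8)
The plan is to reduce everything to a single numerical sequence, namely $\lambda_n := \ell_A(I^{n+1}/QI^n)$ for $n \geq 1$, over which Lemmas \ref{powers} and \ref{length} already give essentially complete control. First I would record the facts I intend to use. By definition of $r_I$ together with Lemma \ref{powers}(2) one has $\lambda_n > 0$ for $1 \leq n \leq r_I-1$ and $\lambda_n = 0$ for $n \geq r_I$; Lemma \ref{length}(1) gives the initial value $\lambda_1 = k_I-1$; and Lemma \ref{length}(2) says that for $2 \leq n \leq r_I-1$ the consecutive difference satisfies
\begin{equation*}
\lambda_{n-1} - \lambda_n = \begin{cases} 0 & \text{if } n \in \Lambda, \\ 1 & \text{if } n \notin \Lambda. \end{cases}
\end{equation*}
I would also pin down $\lambda_{r_I-1}=1$: by Lemma \ref{powers}(1) the quotient $I^{r_I}/QI^{r_I-1}$ is generated by the single image of $x^{r_I-1}y$, so $\lambda_{r_I-1}\leq 1$, while $\lambda_{r_I-1}>0$; this extends the displayed difference formula up to the top index $n=r_I$. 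Finally I would locate $\Lambda$: the stretched hypothesis $Q\cap I^2=QI$ forces $1\notin\Lambda$, and for $n \geq r_I$ the inclusion $I^{n+1}=QI^n\subseteq QI^{n-1}$ gives $QI^{n-1}\cap I^{n+1}=QI^n$, hence $n\notin\Lambda$; therefore $\Lambda\subseteq\{2,3,\ldots,r_I-1\}$.

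For part $(1)$ I would simply telescope the differences over $2 \leq n \leq r_I$. Since $\Lambda\subseteq\{2,\ldots,r_I-1\}$, exactly $(r_I-1)-|\Lambda|$ of these $r_I-1$ indices lie outside $\Lambda$, so summing the formula above yields
\begin{equation*}
k_I-1 = \lambda_1-\lambda_{r_I} = \sum_{n=2}^{r_I}(\lambda_{n-1}-\lambda_n) = (r_I-1)-|\Lambda|,
\end{equation*}
which rearranges to $|\Lambda| = r_I-k_I$.

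For part $(2)$, rather than solve the recurrence for $\lambda_n$ explicitly, I would use summation by parts. Because $\lambda_{r_I}=0$, one has the identity
\begin{equation*}
\sum_{n \geq 1}\lambda_n = \sum_{n=1}^{r_I-1}\lambda_n = \sum_{m=1}^{r_I-1} m\,(\lambda_m-\lambda_{m+1}),
\end{equation*}
which is the clean way to convert the sum of the $\lambda_n$ into a weighted sum of the now-known consecutive differences. Applying the difference formula at index $n=m+1$, the weight $m$ is kept precisely when $m+1\notin\Lambda$, so the right-hand side equals $\sum_{m=1}^{r_I-1} m - \sum_{s\in\Lambda}(s-1)$, where in the second sum I reindex by $s=m+1\in\Lambda\subseteq\{2,\ldots,r_I-1\}$. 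Since $\sum_{m=1}^{r_I-1}m=\binom{r_I}{2}$ and $\sum_{s\in\Lambda}(s-1)=\sum_{s\in\Lambda}s-|\Lambda|$, this is exactly $\binom{r_I}{2}-\sum_{s\in\Lambda}s+|\Lambda|$, as claimed.

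The genuine content is entirely borrowed from the preliminary lemmas, and the two identities then drop out of a telescoping sum and an Abel summation respectively. The main obstacle I anticipate is the boundary bookkeeping: justifying that the difference formula of Lemma \ref{length}(2) is legitimately used all the way up to $n=r_I$ (handled by the separate computation $\lambda_{r_I-1}=1$), and confirming $1\notin\Lambda$ together with $\Lambda\cap\{n\geq r_I\}=\emptyset$ so that the reindexing $s\leftrightarrow m+1$ is a genuine bijection onto $\Lambda$ with no stray terms. Once these endpoint facts are settled, both parts are routine.
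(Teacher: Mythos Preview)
Your argument is correct. The paper itself does not supply a proof of this proposition; it simply cites \cite[Proposition 2.3]{O3}. Your derivation is the natural one from Lemmas~\ref{powers} and~\ref{length}: once you know $\lambda_1=k_I-1$, $\lambda_{r_I-1}=1$, $\lambda_{r_I}=0$, and the step rule $\lambda_{n-1}-\lambda_n\in\{0,1\}$ governed by membership in $\Lambda$, both identities are forced by telescoping and Abel summation exactly as you wrote. Your boundary checks (that $1\notin\Lambda$ via $Q\cap I^2=QI$, that $n\notin\Lambda$ for $n\geq r_I$, and the separate verification $\lambda_{r_I-1}=1$ extending the step rule to $n=r_I$) are precisely the points that need attention, and you handle them correctly.
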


Thanks to the theorem of Huckaba \cite[Theorem 3.1]{Huk} $($also Huckaba and Marley \cite[Theorem 4.7]{HM}$)$ and by Proposition \ref{sum}, we get the following upper bound of the first Hilbert coefficient $\e_1(I)$ of stretched $\m$-primary ideals $I$ as a corollary.

\begin{cor}[c.f. \cite{Huk, HM}]\label{HM}
Suppose that $I$ is stretched.
Then the equality $$\e_1(I) \leq \e_0(I)-\ell_A(A/I)+\binom{r_I}{2}-\sum_{s \in \Lambda}s+|\Lambda|$$
holds true, and the following two conditions are equivalent;
\begin{itemize}
\item[$(1)$] $\e_1(I) = \e_0(I)-\ell_A(A/I)+\binom{r_I}{2}-\sum_{s \in \Lambda}s+|\Lambda|$ and
\item[$(2)$] $G$ is almost Cohen-Macaulay $($i.e. $\depth G \geq d-1)$.
\end{itemize}
\end{cor}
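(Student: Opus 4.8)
The plan is to combine the Huckaba--Marley bound for $\e_1(I)$ with the length computation already recorded in Proposition \ref{sum}. Recall that Huckaba \cite[Theorem 3.1]{Huk} and Huckaba--Marley \cite[Theorem 4.7]{HM} assert that, for any $\m$-primary ideal $I$ in the Cohen--Macaulay local ring $A$ with minimal reduction $Q$, one has
$$\e_1(I) \leq \sum_{n \geq 1}\ell_A(I^n/QI^{n-1}),$$
and that equality holds if and only if $\depth G \geq d-1$. The right-hand sum is finite, since $I^{n+1}=QI^n$ for all $n \geq r_I$ and hence $\ell_A(I^n/QI^{n-1})=0$ once $n>r_I$. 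Consequently the whole statement follows as soon as I rewrite this sum in the asserted closed form: the inequality is immediate, and the equivalence $(1)\Leftrightarrow(2)$ comes for free from the equality clause above, because every manipulation below is an identity rather than an estimate.

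First I would isolate the $n=1$ term. Since $Q$ is a parameter ideal in a Cohen--Macaulay ring and forms a reduction of $I$, we have $\ell_A(A/Q)=\e_0(Q)=\e_0(I)$, whence
$$\ell_A(I/Q)=\ell_A(A/Q)-\ell_A(A/I)=\e_0(I)-\ell_A(A/I).$$
Next I would reindex the remaining terms by setting $m=n-1$, so that $\sum_{n \geq 2}\ell_A(I^n/QI^{n-1})=\sum_{m \geq 1}\ell_A(I^{m+1}/QI^m)$, and then invoke Proposition \ref{sum}$(2)$ to replace this tail by $\binom{r_I}{2}-\sum_{s\in\Lambda}s+|\Lambda|$. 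Adding the two contributions yields
$$\sum_{n\geq 1}\ell_A(I^n/QI^{n-1})=\e_0(I)-\ell_A(A/I)+\binom{r_I}{2}-\sum_{s\in\Lambda}s+|\Lambda|,$$
and substituting this into the Huckaba--Marley inequality produces exactly the claimed upper bound.

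There is essentially no hard step: the analytic content is packaged entirely into the two cited inputs, and my role is only to translate their sum into the invariants $r_I$, $\Lambda$, $\e_0(I)$, $\ell_A(A/I)$ attached to stretched ideals. The one point that warrants care is the bookkeeping---confirming that the index shift lines up the $n\geq 2$ part of the Huckaba--Marley sum precisely with the sum $\sum_{n\geq 1}\ell_A(I^{n+1}/QI^n)$ appearing in Proposition \ref{sum}$(2)$, and that the identity $\e_0(I)=\ell_A(A/Q)$ accounts exactly for the $n=1$ term. Because each of these is an equality, the equality case of the Huckaba--Marley theorem transfers verbatim, so $(1)\Leftrightarrow(2)$ requires no separate argument.
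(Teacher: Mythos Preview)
Your argument is correct and is exactly the approach the paper intends: the paper gives no written proof beyond the sentence preceding the corollary, which says the result follows from Huckaba/Huckaba--Marley together with Proposition~\ref{sum}. You have simply filled in the bookkeeping---splitting off the $n=1$ term via $\ell_A(I/Q)=\e_0(I)-\ell_A(A/I)$ and applying Proposition~\ref{sum}(2) to the tail---which is precisely what that sentence is pointing to.
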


\section{The structure of Sally modules}

The purpose of this section is to summarize some results and techniques on the Sally modules which we need throughout this paper.
Remark that in this section $\m$-primary ideals $I$ are not necessarily stretched.  

Let $T={\rmR}(Q)=A[Qt] \subseteq A[t]$ denotes the Rees algebra of $Q$.
Following Vasconcelos \cite{V}, we consider 
$$S=\rmS_Q(I)=IR/IT \cong \bigoplus_{n \geq 1}I^{n+1}/Q^nI$$ 
the Sally module of $I$ with respect to $Q$.

We give one remark about Sally modules.
See \cite{GNO, V} for further information.

\begin{rem}[\cite{GNO,V}]
We notice that $S$ is a finitely generated graded $T$-module and $\m^{n}S=(0)$ for all $n \gg 0$.
We have $\Ass_TS \subseteq \{\m T\}$ so that $\dim_TS=d$ if $S \neq (0)$.
\end{rem}

From now on, let us introduce some techniques, being inspired by \cite{C, CPV}, which play a crucial role throughout this paper.
See \cite[Section 3]{O2} $($also \cite[Section 2]{O1} for the case where $I=\m)$ for the detailed proofs.

We denote by $E(m)$, for a graded module $E$ and each $m \in \Z$, the graded module whose grading is given by $[E(m)]_n=E_{m+n}$ for all $n \in \Z$.

We have an exact sequence
$$ 0 \to K^{(-1)} \to F \overset{\varphi_{-1}}{\to} G \to R/I R+T \to 0 \ \ \ \ (\dagger_{-1})$$
of graded $T$-modules induced by tensoring the canonical exact sequence
$$ 0 \to T \overset{i}{\hookrightarrow} R \to R/T \to 0$$
of graded $T$-modules with $F$, where $\varphi_{-1}=F \otimes i$, $K^{(-1)}=\Ker \varphi_{-1}$, and $F=T/IT \cong (A/I)[X_1,X_2,\cdots,X_d]$ is a polynomial ring with $d$ indeterminates over the residue class ring $A/I$.
We set $\mu_A(N)=\ell_A(N/\m N)$ denotes the minimal number of generators of an $A$-module $N$.

\begin{lem}$($\cite[Lemma 3.1]{O2}$)$\label{seq}
Assume that $I \supsetneq Q$ and put $\mu=\mu_A(I/Q)$. 
Then there exists an exact sequence
$$ T(-1)^{\mu} \overset{\phi}{\to} R/T \to S(-1) \to 0 $$
as graded $T$-modules.
\end{lem}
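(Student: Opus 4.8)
The plan is to realize the unnamed surjection $R/T \to S(-1)$ concretely as a degreewise quotient map, and then to produce $\phi$ from a lift of a minimal generating set of $I/Q$, showing that such a lift generates the kernel of that surjection in degree one.

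First I would fix the gradings. Since $R=\bigoplus_{n\ge 0}I^nt^n$ and $T=\bigoplus_{n\ge 0}Q^nt^n$ with $Q\subseteq I$, one has $[R/T]_n=I^n/Q^n$; and since $[S(-1)]_n=S_{n-1}=I^n/Q^{n-1}I$ with $Q^n\subseteq Q^{n-1}I$, the degreewise quotient maps $I^n/Q^n\twoheadrightarrow I^n/Q^{n-1}I$ assemble into a surjection $\psi\colon R/T\to S(-1)$ of graded $T$-modules. I would check that $\psi$ respects the action of $a_jt\in T_1$ using $a_jQ^{n-1}I\subseteq Q^nI$. Its kernel is $\Ker\psi=\bigoplus_{n\ge 1}Q^{n-1}I/Q^n$, whose degree-one component is all of $[R/T]_1=I/Q$.

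Next I would build $\phi$. Choose $x_1,\dots,x_\mu\in I$ whose residues form a minimal generating set of $I/Q$, so that $I=Q+(x_1,\dots,x_\mu)$, and let $e_1,\dots,e_\mu$ be the standard basis of $T(-1)^\mu$, each sitting in degree one since $[T(-1)^\mu]_1=T_0^\mu=A^\mu$. Define $\phi$ to be the graded $T$-linear map determined by $\phi(e_k)=\overline{x_kt}\in[R/T]_1=I/Q$. Then $\psi\circ\phi=0$ is immediate: $\Im\phi$ is generated over $T$ by elements of degree one, and $[S(-1)]_1=S_0=0$.

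Finally I would verify $\Im\phi=\Ker\psi$ degree by degree, which gives exactness at both $R/T$ and $S(-1)$. In degree $n\ge 1$, writing $T_{n-1}=Q^{n-1}t^{n-1}$, the image is $[\Im\phi]_n=\bigl(Q^{n-1}(x_1,\dots,x_\mu)+Q^n\bigr)/Q^n$, whereas $[\Ker\psi]_n=Q^{n-1}I/Q^n$. Equality of the two follows from the single ideal identity $Q^{n-1}I=Q^{n-1}\bigl(Q+(x_1,\dots,x_\mu)\bigr)=Q^n+Q^{n-1}(x_1,\dots,x_\mu)$. The arguments here are elementary; the only point requiring care is the bookkeeping of the twin shifts by one built into $T(-1)$ and $S(-1)$ together with the $T$-linearity of $\psi$. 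There is no serious obstacle, since the entire content is the observation that $\Ker\psi$ is generated in degree one by a lift of $I/Q$.
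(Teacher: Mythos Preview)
Your argument is correct and is exactly the standard construction: the paper does not give its own proof of this lemma but merely cites \cite[Lemma 3.1]{O2}, and what you have written is essentially the argument one finds there --- identify $[R/T]_n=I^n/Q^n$ and $[S(-1)]_n=I^n/Q^{n-1}I$, take $\psi$ to be the obvious quotient, and observe that $\Ker\psi=\bigoplus_{n\ge1}Q^{n-1}I/Q^n$ is generated over $T$ by any lift of a generating set of $I/Q$ placed in degree one. There is nothing to add.
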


Tensoring the exact sequence of Lemma \ref{seq} with $F$, we get an exact sequence
$$ F(-1)^{\mu} \overset{\overline{\phi}}{\to} R/I R+T \to (S/I S)(-1) \to 0$$
of graded $F$-modules, where $\overline{\phi}=A/I \otimes \phi$.

We furthermore get the following commutative diagram
\[\begin{array}{ccccc}
 (F_{0}^{\mu} \otimes F)(-1) & \to & ([R/IR+T]_1 \otimes F)(-1) & \to & 0 \\
\mapdown{\simeq} & & \mapdown{\varphi_0} & & \\
 F(-1)^{\mu} & \to & \Im \overline{\phi} & \to & 0 
\end{array}\]
of graded $F$-modules by tensoring the sequence $$F_0^{\mu} \to [R/IR+T]_1 \to 0$$ with $F$.
Then, we get the exact sequence
$$ 0 \to K^{(0)}(-1) \to ([R/IR+T]_1 \otimes F)(-1) \overset{\varphi_0}{\to} R/IR+T \to S/IS(-1) \to 0 \ \ \ (\dagger_0)$$
of graded $F$-modules where $K^{(0)}=\Ker \varphi_0$.

Notice that $\Ass_TK^{(m)} \subseteq \{\m T\}$ for all $m = -1, 0$, because $F \cong (A/I)[X_1,X_2,\cdots,X_d]$ is a polynomial ring over the residue ring $A/I$ and $[R/IR+T]_1 \otimes F$ is a maximal Cohen-Macaulay module over $F$.

We then have the following proposition by the exact sequences $(\dagger_{-1})$ and $(\dagger_0)$.

\begin{prop}$($\cite[Lemma 3.3]{O2}$)$\label{EV}
We have
\begin{eqnarray*}
\ell_A(I^n/I^{n+1})&=&\ell_A(A/[I^2+Q])\binom{n+d-1}{d-1}-\ell_A(I/[I^2+Q])\binom{n+d-2}{d-2}\\
&+& \ell_A([S/IS]_{n-1})-\ell_A(K^{(-1)}_n)-\ell_A(K^{(0)}_{n-1})
\end{eqnarray*}
for all $n \geq 0$.
\end{prop}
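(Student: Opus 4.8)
The plan is to read off a length identity from the two exact sequences $(\dagger_{-1})$ and $(\dagger_0)$ in each fixed internal degree $n$ and then splice them together. First I would restrict $(\dagger_{-1})$ to degree $n$, obtaining the exact sequence of finite-length $A$-modules
\[ 0 \to K^{(-1)}_n \to F_n \to I^n/I^{n+1} \to [R/IR+T]_n \to 0, \]
and restrict $(\dagger_0)$ to degree $n$, obtaining
\[ 0 \to K^{(0)}_{n-1} \to [([R/IR+T]_1 \otimes F)(-1)]_n \to [R/IR+T]_n \to [S/IS]_{n-1} \to 0, \]
where I have used $[K^{(0)}(-1)]_n=K^{(0)}_{n-1}$ and $[(S/IS)(-1)]_n=[S/IS]_{n-1}$. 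Since length is additive on exact sequences of finite-length modules, the alternating sum of lengths vanishes in each sequence. Solving the first for $\ell_A(I^n/I^{n+1})$ and the second for $\ell_A([R/IR+T]_n)$, and substituting the latter into the former, eliminates the common term $\ell_A([R/IR+T]_n)$ and leaves
\[ \ell_A(I^n/I^{n+1}) = \ell_A(F_n) + \ell_A([([R/IR+T]_1\otimes F)(-1)]_n) + \ell_A([S/IS]_{n-1}) - \ell_A(K^{(-1)}_n) - \ell_A(K^{(0)}_{n-1}). \]

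Next I would evaluate the two ``polynomial'' contributions explicitly. Because $F\cong (A/I)[X_1,\ldots,X_d]$, its degree-$n$ piece is $A/I$-free of rank $\binom{n+d-1}{d-1}$, so $\ell_A(F_n)=\ell_A(A/I)\binom{n+d-1}{d-1}$. Likewise $[R/IR+T]_1\cong I/(I^2+Q)$ is an $A/I$-module, whence $[R/IR+T]_1\otimes F\cong (I/(I^2+Q))[X_1,\ldots,X_d]$, and after the shift by $(-1)$ its degree-$n$ piece has length $\ell_A(I/[I^2+Q])\binom{n+d-2}{d-1}$. The decisive algebraic step is then to merge these two terms: using the length identity $\ell_A(A/[I^2+Q])=\ell_A(A/I)+\ell_A(I/[I^2+Q])$ to rewrite $\ell_A(A/I)$, followed by Pascal's rule $\binom{n+d-1}{d-1}=\binom{n+d-2}{d-1}+\binom{n+d-2}{d-2}$, the coefficient of $\ell_A(I/[I^2+Q])$ collapses precisely to $-\binom{n+d-2}{d-2}$ while the leading term becomes $\ell_A(A/[I^2+Q])\binom{n+d-1}{d-1}$. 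This reproduces the asserted formula verbatim.

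Before taking alternating sums I must check that every graded piece above has finite length over $A$, so that additivity of length is legitimate. This is clear for $F_n$, $I^n/I^{n+1}$ and $[R/IR+T]_n$ since $A/I$ has finite length; for $[S/IS]_{n-1}$ it follows from $[S]_{n-1}=I^n/Q^{n-1}I$ having finite length; and for $K^{(-1)}_n$ and $K^{(0)}_{n-1}$ it follows from their being graded submodules of $F$ and of $[R/IR+T]_1\otimes F$, together with $\Ass_T K^{(m)}\subseteq\{\m T\}$ noted after $(\dagger_0)$. I do not anticipate a genuine obstacle: granted the two exact sequences, the argument is purely formal bookkeeping. The only place demanding real care is the second paragraph—correctly tracking the degree shift $(-1)$ on the $F$-free module and invoking Pascal's rule with the right sign so that $\binom{n+d-2}{d-1}$ is traded for $\binom{n+d-2}{d-2}$; this is exactly where a slip would occur, so I would verify the outcome against the small cases $n=0$ and $n=1$ (noting $[S/IS]_{-1}=K^{(0)}_{-1}=0$ and $K^{(-1)}_0=0$) as a consistency check.
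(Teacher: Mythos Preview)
Your proposal is correct and follows exactly the route the paper indicates: the proposition is stated as an immediate consequence of the exact sequences $(\dagger_{-1})$ and $(\dagger_0)$ (with details deferred to \cite[Lemma 3.3]{O2}), and you carry out precisely this degree-$n$ length count, including the Pascal-rule rewrite that converts $\ell_A(A/I)\binom{n+d-1}{d-1}+\ell_A(I/[I^2+Q])\binom{n+d-2}{d-1}$ into the displayed form. There is nothing to add.
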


\vskip 2mm
We also need the notion of {\it{filtration of the Sally module}} which was introduced by M. Vaz Pinto \cite{VP} as follows.

\begin{defn}(\cite{VP})
We set, for each $m \geq 1$,
$$ S^{(m)}=I^{m}t^{m-1}R/I^{m}t^{m-1}T (\cong I^{m}R/I^{m}T(-m+1)).$$ 
\end{defn}

We notice that $S^{(1)}=S$, and $S^{(m)}$ are finitely generated graded $T$-modules for all $m \geq 1$, since $R$ is a module-finite extension of the graded ring $T$.

The following lemma follows by the definition of the graded module $S^{(m)}$.

\begin{lem}\label{fact1}
Let $m \geq 1$ be an integer.
Then the following assertions hold true.
\begin{itemize}
\item[$(1)$] $\m^{n} S^{(m)} = (0)$ for integers $n \gg 0$; hence ${\dim}_TS^{(m)} \leq d$.
\item[$(2)$] The homogeneous components $\{ S^{(m)}_n \}_{n \in \Z}$ of the graded $T$-module $S^{(m)}$ are given by
\[ S^{(m)}_n \cong  \left\{
\begin{array}{rl}
(0) & \quad \mbox{if $n \leq m-1 $,} \\
I^{n+1}/Q^{n-m+1}I^{m} & \quad \mbox{if $n \geq m$.}
\end{array}
\right.\]
\end{itemize}
\end{lem}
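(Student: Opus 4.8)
The plan is to treat the two assertions in the order (2) then (1), since the explicit description of the graded pieces is exactly what powers the argument for the first part.

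First I would prove (2) by a direct expansion from the definition. Writing $R=\bigoplus_{j\ge 0}I^{j}t^{j}$ and $T=\bigoplus_{j\ge 0}Q^{j}t^{j}$ as graded subrings of $A[t]$, multiplication by $I^{m}t^{m-1}$ gives $I^{m}t^{m-1}R=\bigoplus_{j\ge 0}I^{m+j}t^{m-1+j}$ and $I^{m}t^{m-1}T=\bigoplus_{j\ge 0}Q^{j}I^{m}t^{m-1+j}$. Reindexing by the total $t$-degree $n=m-1+j$ (so that $m+j=n+1$ and $j=n-m+1\ge 0$) and forming the quotient degree by degree then yields $[S^{(m)}]_{n}=(0)$ for $n\le m-1$ and $[S^{(m)}]_{n}\cong I^{n+1}/Q^{n-m+1}I^{m}$ for $n\ge m$; the borderline degree $n=m-1$ vanishes because $Q^{0}I^{m}=I^{m}$. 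This step is entirely mechanical once the grading conventions are pinned down.

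For (1), I would first record that $S^{(m)}$ is a finitely generated graded $T$-module: this is immediate since $R$ is module-finite over $T$ (as $Q$ is a reduction of $I$), hence Noetherian as a $T$-module, so the $T$-submodule $I^{m}t^{m-1}R$ and its quotient $S^{(m)}$ are finitely generated. The key move is then to reduce the $\m$-power-torsion statement to the corresponding known fact for the ordinary Sally module $S=S^{(1)}$. Using the formula from (2), for each $m\ge 1$ the containment $Q^{n-m+1}I^{m}=Q^{n-m}(QI^{m})\subseteq Q^{n-m}I^{m+1}$ produces, in each degree $n$, the natural surjection $I^{n+1}/Q^{n-m+1}I^{m}\twoheadrightarrow I^{n+1}/Q^{n-m}I^{m+1}$, and these assemble into a graded $T$-linear epimorphism $S^{(m)}\twoheadrightarrow S^{(m+1)}$. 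Composing them gives a graded surjection $S=S^{(1)}\twoheadrightarrow S^{(m)}$. Since $\m^{N}S=(0)$ for $N\gg 0$ by the Remark above (Vasconcelos), it follows that $\m^{N}S^{(m)}=(0)$ as well. The dimension bound then drops out: once $\m^{N}S^{(m)}=(0)$, the module $S^{(m)}$ is supported inside $V(\m T)$, and because $Q$ is generated by a system of parameters the fiber cone $T/\m T\cong (A/\m)[X_{1},\dots,X_{d}]$ has dimension $d$, whence $\dim_{T}S^{(m)}\le d$.

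I do not expect a genuine obstacle here, since the lemma is essentially a bookkeeping consequence of the definition, as the surrounding text signals. The only points needing care are the consistent handling of the degree shift $(-m+1)$ built into the definition of $S^{(m)}$, and the verification that the connecting maps $S^{(m)}\to S^{(m+1)}$ are honestly $T$-linear and well defined in every degree; the substantive analytic input, namely $\m^{N}S=(0)$, is imported wholesale from the established theory of the Sally module rather than reproved.
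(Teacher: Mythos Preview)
Your proposal is correct. The paper offers no proof of this lemma beyond the sentence ``The following lemma follows by the definition of the graded module $S^{(m)}$,'' and your computation for part~(2) is precisely that unpacking of the definition; your argument for part~(1), reducing to the known fact $\m^{N}S=(0)$ via the chain of surjections $S=S^{(1)}\twoheadrightarrow\cdots\twoheadrightarrow S^{(m)}$ (which is exactly the content of Lemma~\ref{VP}~(1) in the paper), is a clean way to make the claim precise.
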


In this paper, we need some structure theorem of $S^{(2)}$.
Let us introduce one proposition about it.

\begin{prop}$($\cite[Proposition 2.2 and 2.9]{OR}$)$\label{S2}
Suppose that $Q \cap I^2=QI$.
Then the following assertions hold true where $\p=\m T$.
\begin{itemize}
\item[$(1)$] $\Ass_T S^{(2)} \subseteq \{\p\}$ so that $\dim_TS^{(2)}=d$ if $S^{(2)} \neq (0)$.
\item[$(2)$] $\ell_{T_{\p}}(S^{(2)}_{\p})=\e_1(I)-\e_0(I)+\ell_A(A/I)-\ell_A(I^2/QI)$.
\end{itemize}
\end{prop}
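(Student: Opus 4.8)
The plan is to analyze $S^{(2)}$ through its graded pieces $S^{(2)}_n \cong I^{n+1}/Q^{n-1}I^2$ (Lemma \ref{fact1}) and to compare it with the ordinary Sally module $S = S^{(1)}$, for which the analogous facts $\Ass_T S \subseteq \{\p\}$ and $\dim_T S = d$ are already recorded in the Remark of this section. I would keep the two assertions separate: $(1)$ is a statement about associated primes, which I would prove by an $\m$-adic layering argument over the polynomial ring $B := T/\p \cong (A/\m)[X_1,\dots,X_d]$, while $(2)$ is a length computation that I would extract from a short exact sequence relating $S$ and $S^{(2)}$ together with the classical formula for the multiplicity of the Sally module.

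For $(1)$, by Lemma \ref{fact1}$(1)$ we have $\m^N S^{(2)}=(0)$ for $N\gg 0$, hence $\Supp_T S^{(2)}\subseteq V(\p)$ and every $P\in\Ass_T S^{(2)}$ satisfies $P\supseteq\p$. It remains to exclude embedded primes. I would filter $S^{(2)}$ by powers of $\m$, namely $S^{(2)}\supseteq \m S^{(2)}\supseteq\cdots\supseteq \m^N S^{(2)}=(0)$, each successive quotient $\m^i S^{(2)}/\m^{i+1}S^{(2)}$ being a finitely generated graded module over the domain $B$. Since $\Ass_T S^{(2)}\subseteq \bigcup_i \Ass_B(\m^i S^{(2)}/\m^{i+1}S^{(2)})$, it suffices to show each layer is a torsion-free $B$-module, for then its only associated prime is $(0)_B$, which pulls back to $\p$; and if $S^{(2)}\neq(0)$ this forces $\dim_T S^{(2)}=\dim T/\p=d$. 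Torsion-freeness is exactly where $Q\cap I^2=QI$ enters: it gives a Valabrega--Valla type condition guaranteeing that the images $a_1t,\dots,a_dt$ of the generators of $Q$ act on the graded pieces without collapsing, so that a general linear form of $B$ is a nonzerodivisor on each layer.

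For $(2)$, I would use the graded surjection $S=S^{(1)}\twoheadrightarrow S^{(2)}$ given in degree $n\ge 2$ by the canonical map $I^{n+1}/Q^nI\to I^{n+1}/Q^{n-1}I^2$ (well defined because $Q^nI=Q^{n-1}(QI)\subseteq Q^{n-1}I^2$), yielding a short exact sequence of graded $T$-modules
\[ 0 \to W \to S \to S^{(2)} \to 0, \qquad W_n = Q^{n-1}I^2/Q^nI . \]
Localizing at $\p$ (localization being exact) gives $\ell_{T_\p}(S_\p)=\ell_{T_\p}(W_\p)+\ell_{T_\p}(S^{(2)}_\p)$. The first term is the well-known multiplicity of the Sally module, $\ell_{T_\p}(S_\p)=\e_1(I)-\e_0(I)+\ell_A(A/I)$ (see \cite{GNO,V}, consistent with Proposition \ref{EV}). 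For the middle term I would prove $\ell_{T_\p}(W_\p)=\ell_A(I^2/QI)$ via the natural surjection $(I^2/QI)\otimes_{A/Q}\mathrm{gr}_Q(A)(-1)\twoheadrightarrow W$, $\bar x\otimes\bar q\mapsto\overline{qx}$ (note $Q\cdot(I^2/QI)=0$, so $I^2/QI$ is an $A/Q$-module): this map becomes an isomorphism after localizing at $\p$, again because $Q\cap I^2=QI$ forces the multiplication maps $Q^{n-1}\otimes(I^2/QI)\to W_n$ to be injective generically. Since $\mathrm{gr}_Q(A)=(A/Q)[X_1,\dots,X_d]$ is a polynomial ring over $A/Q$, flatness over $A/Q$ gives that the $\p$-length of the source equals $\ell_A(I^2/QI)$, and substituting the two lengths yields the stated formula.

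The main obstacle in both parts is the same technical point: converting the hypothesis $Q\cap I^2=QI$ into the regular-sequence and torsion-freeness statements that $(i)$ eliminate embedded primes of $S^{(2)}$ and $(ii)$ pin down the generic rank of $W$ over $B$. Once the requisite injectivity of the multiplication-by-parameter maps on the $\m$-adic layers is established, the remaining length bookkeeping in $(2)$ is routine, so I expect the heart of the argument to lie precisely there.
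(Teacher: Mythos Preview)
The paper does not prove this proposition at all: it is stated with the citation \cite[Proposition 2.2 and 2.9]{OR} and used as a black box, so there is no in-paper argument to compare your proposal against.

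That said, your outline is consistent with the machinery the paper already records. The short exact sequence you write down, with $W_n=Q^{n-1}I^2/Q^nI$, is exactly the sequence $0\to L^{(1)}\to S^{(1)}\to S^{(2)}\to 0$ of Lemma~\ref{VP}(1), and your surjection $(I^2/QI)\otimes_{A/Q}\mathrm{gr}_Q(A)(-1)\twoheadrightarrow W$ is the map $\theta_1:D^{(1)}(-1)\to L^{(1)}$ of Lemma~\ref{VP}(2). So the architecture of part~(2) is on solid ground within the paper's own framework, and the formula $\ell_{T_\p}(S_\p)=\e_1(I)-\e_0(I)+\ell_A(A/I)$ you invoke is indeed the classical Sally--Vasconcelos computation. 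What remains, as you correctly isolate, is to show that $\theta_1$ is generically injective (equivalently, that $\ker\theta_1$ has $T$-dimension $<d$) under the hypothesis $Q\cap I^2=QI$; this is the substantive content of \cite[Proposition 2.2]{OR} and is not supplied by anything in the present paper. Your sketch for part~(1) via $\m$-adic layers is a standard reduction, but the assertion that ``a general linear form of $B$ is a nonzerodivisor on each layer'' is again the crux, and you have not indicated how $Q\cap I^2=QI$ actually yields it; in \cite{OR} this is handled by a direct analysis of the graded components. In short: your plan is correct in shape and compatible with the paper's setup, but the two places you flag as ``the main obstacle'' are precisely where a proof would have to do real work, and neither this paper nor your proposal carries that out.
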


Let $L^{(m)}= T S_m^{(m)} $ be a graded $T$-submodule of $S^{(m)}$ generated by $S_m^{(m)}$ and
\begin{eqnarray*}
D^{(m)} &=& (I^{m+1}/QI^m) \otimes (A/\Ann_A(I^{m+1}/QI^m))[X_1,X_2,\cdots,X_d]\\
&\cong& (I^{m+1}/QI^m)[X_1,X_2,\cdots,X_d]
\end{eqnarray*}
for $m \geq 1$ $($c.f. \cite[Section 2]{VP}$)$.

We then have the following lemma.

\begin{lem}$($\cite[Section 2]{VP}$)$\label{VP}
The following assertions hold true for $m \geq 1$.
\begin{itemize}
\item[$(1)$] $S^{(m)}/L^{(m)} \cong S^{(m+1)}$ so that the sequence
$$ 0 \to L^{(m)} \to S^{(m)} \to S^{(m+1)} \to 0 $$
 is exact as graded $T$-modules.
\item[$(2)$] There is a surjective homomorphism $\theta_m: D^{(m)}(-m) \to L^{(m)}$ of graded $T$-modules.
\end{itemize}
\end{lem}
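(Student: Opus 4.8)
The plan is to prove both statements by passing to homogeneous components, using the description of $S^{(m)}_n$ recorded in Lemma~\ref{fact1} together with the fact that, since $A$ is Cohen--Macaulay and $Q$ is a parameter ideal forming a reduction of $I$, the generators $a_1,\dots,a_d$ of $Q$ form a regular sequence. In particular $Q$ is of linear type, so that $T\cong A[X_1,\dots,X_d]/(a_iX_j-a_jX_i)$, and the associated graded ring $T/QT\cong (A/Q)[X_1,\dots,X_d]$ of $Q$ is a polynomial ring over $A/Q$ whose degree-$j$ piece is $Q^j/Q^{j+1}$. These two facts are what will let me turn the (easy) degreewise computations into honest graded $T$-module statements.

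For part $(1)$ I would first realize $S^{(m)}$ inside $I^mt^{m-1}R$, so that its degree-$n$ component is $S^{(m)}_n=I^{n+1}t^n/Q^{n-m+1}I^mt^n$ and the action of $\alpha t^k$ (with $\alpha\in Q^k$) is multiplication by $\alpha$. Then $L^{(m)}=TS^{(m)}_m$ has degree-$n$ component equal to the image of $Q^{n-m}I^{m+1}t^n$, and because $QI^m\subseteq I^{m+1}$ gives $Q^{n-m+1}I^m\subseteq Q^{n-m}I^{m+1}$, this component is $Q^{n-m}I^{m+1}/Q^{n-m+1}I^m$. Consequently $[S^{(m)}/L^{(m)}]_n\cong I^{n+1}/Q^{n-m}I^{m+1}$, which is exactly $S^{(m+1)}_n$ by Lemma~\ref{fact1}. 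To make this an isomorphism of $T$-modules I would exhibit the graded homomorphism $\psi\colon S^{(m)}\to S^{(m+1)}$ whose degree-$n$ piece is the natural surjection $I^{n+1}/Q^{n-m+1}I^m\twoheadrightarrow I^{n+1}/Q^{n-m}I^{m+1}$; since all these maps are induced by the identity on the numerators $I^{n+1}$, they are compatible with multiplication by $T$, so $\psi$ is a surjective graded $T$-homomorphism with $\ker\psi=L^{(m)}$ by the computation just made. This gives the asserted short exact sequence.

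For part $(2)$ the crucial preliminary observation is that $Q$ annihilates $V:=I^{m+1}/QI^m$, since $QI^{m+1}\subseteq QI^m$. Hence $V$ is an $A/Q$-module, $\overline{A}:=A/\Ann_A(V)$ is an $A/Q$-algebra, and $D^{(m)}\cong V\otimes_{A/Q}(T/QT)$ becomes a $T$-module through the surjection $T\twoheadrightarrow T/QT$, with degree-$j$ component $V\otimes_{A/Q}(Q^j/Q^{j+1})$. I would then define $\theta_m$ in degree $m+j$ as the multiplication map $V\otimes_{A/Q}(Q^j/Q^{j+1})\to Q^jI^{m+1}/Q^{j+1}I^m=L^{(m)}_{m+j}$ sending $\overline{\xi}\otimes\overline{q}\mapsto\overline{q\xi}$. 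This is well defined (indeed $q\xi\in Q^{j+1}I^m$ whenever $\xi\in QI^m$ or $q\in Q^{j+1}$), it is $(T/QT)$-linear hence a graded $T$-homomorphism, and it is surjective because $L^{(m)}_{m+j}=Q^jI^{m+1}/Q^{j+1}I^m$ is generated over $A/Q$ by the classes $\overline{q\xi}$. Rewriting $T/QT\cong(A/Q)[X_1,\dots,X_d]$ then recovers the stated map $\theta_m\colon D^{(m)}(-m)\to L^{(m)}$.

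I expect the genuinely delicate point to be the verification that $D^{(m)}$ really carries a $T$-module structure for which $\theta_m$ is $T$-linear, rather than merely an $A$-linear isomorphism in each degree. This is precisely where the regular-sequence hypothesis is used twice: it forces $Q\subseteq\Ann_A(V)$, which makes the tensor description $D^{(m)}\cong V\otimes_{A/Q}(T/QT)$ available, and it makes $T/QT$ a polynomial ring, so that the variable action $X_i\leftrightarrow a_it$ on $V[X_1,\dots,X_d]$ is consistent (equivalently, the Koszul relations $a_iX_j-a_jX_i$ act as zero because $a_iV=a_jV=0$). Once these module structures are in place, everything else reduces to the elementary inclusions $QI^m\subseteq I^{m+1}$ and $Q^{n-m+1}I^m\subseteq Q^{n-m}I^{m+1}$ already used in part $(1)$.
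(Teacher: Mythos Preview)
The paper does not supply its own proof of this lemma; it simply attributes the statement to \cite[Section 2]{VP}. Your argument correctly reconstructs the details: the degreewise identifications $L^{(m)}_n=Q^{n-m}I^{m+1}/Q^{n-m+1}I^m$ and $[S^{(m)}/L^{(m)}]_n\cong I^{n+1}/Q^{n-m}I^{m+1}=S^{(m+1)}_n$ are exactly what is needed for part $(1)$, and your use of the inclusion $Q\subseteq\Ann_A(I^{m+1}/QI^m)$ to equip $D^{(m)}$ with a $T$-module structure via $T\twoheadrightarrow T/QT\cong(A/Q)[X_1,\dots,X_d]$ is the right mechanism for part $(2)$. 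One minor remark: you do not really need the explicit presentation $T\cong A[X_1,\dots,X_d]/(a_iX_j-a_jX_i)$; all that is used is the isomorphism $T/QT\cong(A/Q)[X_1,\dots,X_d]$ coming from the regularity of $a_1,\dots,a_d$, together with the surjection $T\to T/QT$. Everything else is exactly as you wrote.
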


For each $m \geq 1$, tensoring the exact sequence
$$ 0 \to L^{(m)} \to S^{(m)} \to S^{(m+1)} \to 0$$
and the surjective homomorphism $\theta_m:D^{(m)}(-m) \to L^{(m)}$ of graded $T$-modules with $F$, we get the exact sequence
$$ 0 \to K^{(m)}(-m) \to \overline{D^{(m)}}(-m) \overset{\varphi_{m}}{\to} \overline{S^{(m)}} \to \overline{S^{(m+1)}} \to 0 \ \ \ \ (\dagger_m) $$
of graded $F$-modules where $\overline{D^{(m)}}=D^{(m)}/ID^{(m)}$, $\overline{S^{(m)}}=S^{(m)}/IS^{(m)}$, and $K^{(m)}=\Ker \varphi_{m}$.

Notice here that, for all $m \geq 1$, we have $\Ass_T K^{(m)} \subseteq \{\m T\}$ because $\overline{D^{(m)}} \cong (I^{m+1}/QI^m+I^{m+2})[X_1,X_2,\cdots,X_d]$ is a maximal Cohen-Macaulay module over $F$.

We then have the following proposition.

\begin{prop}\label{function}{$($\cite[Proposition 3.7]{O3}$)$}
The following assertions hold true:
\begin{itemize}
\item[$(1)$] We have 
\begin{eqnarray*}
\ell_A(I^n/I^{n+1}) &=& \{\ell_A(A/I^2+Q)+\sum_{m=1}^{r_I-1}\ell_A(I^{m+1}/QI^m+I^{m+2})\}\binom{n+d-1}{d-1}\\
&+& \sum_{k=1}^{r_I}(-1)^k\left\{\sum_{m=k-1}^{r_I-1}\binom{m+1}{k}\ell_A(I^{m+1}/QI^m+I^{m+2})\right\}\binom{n+d-k-1}{d-k-1}\\
&-& \sum_{m=-1}^{r_I-1}\ell_A(K^{(m)}_{n-m-1})
\end{eqnarray*}
for all $n \geq \max\{0, r_I-d+1\}$.
\item[$(2)$] $\displaystyle \e_0(I)=\ell_A(A/I^2+Q)+\sum_{m=1}^{r_I-1}\ell_A(I^{m+1}/QI^m+I^{m+2})-\sum_{m=-1}^{r_I-1}\ell_{T_{\p}}(K^{(m)}_{\p})$ where ${\p}=\m T $.
\end{itemize}
\end{prop}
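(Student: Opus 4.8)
The plan is to obtain part (1) by substituting the filtration of the Sally module into Proposition~\ref{EV}, and then to read off part (2) from the top-degree coefficient. Proposition~\ref{EV} already writes $\ell_A(I^n/I^{n+1})$ in terms of $\ell_A([S/IS]_{n-1})$ together with the corrections $\ell_A(K^{(-1)}_n)$ and $\ell_A(K^{(0)}_{n-1})$, so the first task is to compute $\ell_A([S/IS]_{n-1})$ in closed form. For that I would feed the exact sequences $(\dagger_m)$, for $m=1,\dots,r_I-1$, through the degree-$n$ alternating length relation
$$\ell_A(\overline{S^{(m)}}_n)-\ell_A(\overline{S^{(m+1)}}_n)=\ell_A(\overline{D^{(m)}}_{n-m})-\ell_A(K^{(m)}_{n-m}),$$
and sum over $m$. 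Because $I^{n+1}=Q^{n-r_I+1}I^{r_I}$ for $n\ge r_I$, Lemma~\ref{fact1} forces $S^{(r_I)}=0$, hence $\overline{S^{(r_I)}}=0$, so the left-hand side telescopes to $\ell_A([S/IS]_n)$; using $\overline{D^{(m)}}\cong(I^{m+1}/QI^m+I^{m+2})[X_1,\dots,X_d]$ to evaluate the middle term gives
$$\ell_A([S/IS]_n)=\sum_{m=1}^{r_I-1}\ell_A(I^{m+1}/QI^m+I^{m+2})\binom{n-m+d-1}{d-1}-\sum_{m=1}^{r_I-1}\ell_A(K^{(m)}_{n-m}).$$
Shifting $n\mapsto n-1$ and inserting this into Proposition~\ref{EV} collects every correction term into the single tail $-\sum_{m=-1}^{r_I-1}\ell_A(K^{(m)}_{n-m-1})$, matching the claim.

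Next I would reduce the remaining polynomial part to the stated binomial shape. The engine is the identity
$$\binom{n-1-m+d-1}{d-1}=\sum_{k=0}^{m+1}(-1)^k\binom{m+1}{k}\binom{n+d-k-1}{d-k-1},$$
read off from $z^{m+1}=(1-(1-z))^{m+1}$ as the coefficient of $z^n$ in $\frac{z^{m+1}}{(1-z)^d}=\sum_k(-1)^k\binom{m+1}{k}\frac{1}{(1-z)^{d-k}}$. Applying it to each summand, the $k=0$ contributions combine with $\ell_A(A/I^2+Q)\binom{n+d-1}{d-1}$ to produce the leading coefficient $\ell_A(A/I^2+Q)+\sum_{m=1}^{r_I-1}\ell_A(I^{m+1}/QI^m+I^{m+2})$, and after interchanging the order of summation the rest becomes $\sum_{k=1}^{r_I}(-1)^k\{\sum_{m=\max(1,k-1)}^{r_I-1}\binom{m+1}{k}\ell_A(I^{m+1}/QI^m+I^{m+2})\}\binom{n+d-k-1}{d-k-1}$. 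The only bookkeeping subtlety is the missing $m=0$ slot in the $k=1$ inner sum: I would note that the term $-\ell_A(I/[I^2+Q])\binom{n+d-2}{d-2}$ of Proposition~\ref{EV} is exactly $(-1)^1\binom{1}{1}\ell_A(I^{1}/QI^{0}+I^{2})\binom{n+d-2}{d-2}$, i.e.\ precisely the $k=1,\,m=0$ summand, so absorbing it lowers the inner index to $m=k-1=0$ and reproduces the formula verbatim.

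For part (2) I would pass to $n\gg0$, where $\ell_A(I^n/I^{n+1})$ equals the Hilbert polynomial, whose $\binom{n+d-1}{d-1}$-coefficient is $\e_0(I)$. On the right of part (1) only the leading bracket and the $K^{(m)}$ terms can reach degree $d-1$, since the binomials $\binom{n+d-k-1}{d-k-1}$ with $k\ge1$ have smaller degree. As $\Ass_T K^{(m)}\subseteq\{\p\}$ with $\p=\m T$ and $T/\p$ a standard graded polynomial ring of dimension $d$ over the field $A/\m$, each $K^{(m)}$ admits a graded prime filtration in which the number of factors isomorphic to a shift of $T/\p$ equals $\ell_{T_\p}(K^{(m)}_\p)$ (the remaining factors being lower-dimensional); hence the top coefficient of $\ell_A(K^{(m)}_n)$, unaffected by the degree shift, is $\ell_{T_\p}(K^{(m)}_\p)$. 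Comparing $\binom{n+d-1}{d-1}$-coefficients then yields $\e_0(I)=\ell_A(A/I^2+Q)+\sum_{m=1}^{r_I-1}\ell_A(I^{m+1}/QI^m+I^{m+2})-\sum_{m=-1}^{r_I-1}\ell_{T_\p}(K^{(m)}_\p)$.

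The step I expect to be the main obstacle is not any one algebraic manipulation but controlling the range of validity so that the displayed equalities hold as equalities of integers rather than merely of polynomials. Replacing the genuine lengths $\ell_A(\overline{D^{(m)}}_{n-1-m})$ by the polynomial values $\ell_A(I^{m+1}/QI^m+I^{m+2})\binom{n-1-m+d-1}{d-1}$ is only legitimate when $n-1-m\ge -(d-1)$, the binomial $\binom{j+d-1}{d-1}$ vanishing for $-(d-1)\le j\le -1$ but not for $j\le -d$. The tightest requirement comes from $m=r_I-1$, forcing $n\ge r_I-d+1$; together with the bound $n\ge0$ inherited from Proposition~\ref{EV}, this is exactly the hypothesis $n\ge\max\{0,r_I-d+1\}$. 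I would track this threshold uniformly through the telescoping and the binomial expansion to be sure it is the only constraint needed.
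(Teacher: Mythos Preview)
The paper does not actually prove this proposition: it is quoted verbatim from \cite[Proposition~3.7]{O3} and no argument is supplied in the present paper. So there is no ``paper's own proof'' to compare against beyond the fact that the result is imported from \cite{O3}.

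That said, your proposed argument is correct and is precisely the natural proof one would expect the cited reference to contain. The telescoping via the exact sequences $(\dagger_m)$ together with $S^{(r_I)}=0$ gives the closed form for $\ell_A([S/IS]_{n-1})$; substituting into Proposition~\ref{EV} and expanding each $\binom{n-1-m+d-1}{d-1}$ via the Vandermonde-type identity produces exactly the displayed sum, with the $m=0$ term in the $k=1$ inner sum accounted for by the $-\ell_A(I/[I^2+Q])\binom{n+d-2}{d-2}$ term of Proposition~\ref{EV} as you observe. Your analysis of the validity range $n\ge\max\{0,r_I-d+1\}$ is also on target: the only place where a genuine length is being replaced by a polynomial value is $\ell_A(\overline{D^{(m)}}_{n-1-m})$, and the worst case $m=r_I-1$ gives the stated threshold. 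For part~(2), the argument via the graded prime filtration of $K^{(m)}$ (using $\Ass_TK^{(m)}\subseteq\{\p\}$) to identify the top Hilbert coefficient of $\ell_A(K^{(m)}_n)$ with $\ell_{T_\p}(K^{(m)}_\p)$ is standard and correct.
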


We need the following proposition in Section 4, where $B=T/\m T \cong (A/\m)[X_1,X_2,\cdots,X_d]$ is a polynomial ring with $d$ indeterminates over the field $A/\m$.

\begin{prop}\label{functionS}{$($c.f. \cite[Proposition 4.1]{O3}$)$}
Suppose that $I$ is stretched. 
Then the following assertions hold true:
\begin{itemize}
\item[$(1)$] We have $\overline{D^{(m)}} \cong B$ as graded $F$-modules for all $1 \leq m \leq r_I-1$, and hence we have exact sequences
$$ 0 \to K^{(m)}(-m) \to B(-m) \overset{\varphi_{m}}{\to} \overline{S^{(m)}} \to \overline{S^{(m+1)}} \to 0 $$
for $1 \leq m \leq r_I-2$, and
$$ 0 \to K^{(r_I-1)}(-r_I+1) \to B(-r_I+1) \overset{\varphi_{r_I-1}}{\to} \overline{S^{(r_I-1)}} \to 0$$
of graded $F$-modules.

\item[$(2)$] We have 
\begin{eqnarray*}
\ell_A(I^n/I^{n+1}) &=& \{\e_0(I)+r_I-k_I\}\binom{n+d-1}{d-1}\\
&-&\{\e_0(I)-\ell_A(A/I)+\binom{r_I}{2}+r_I-k_I\}\binom{n+d-2}{d-2}\\
&+& \sum_{k=2}^{r_I}(-1)^k\binom{r_I+1}{k+1}\binom{n+d-k-1}{d-k-1}- \sum_{m=-1}^{r_I-1} \ell_A(K^{(m)}_{n-m-1})
\end{eqnarray*}
for all $n \geq \max\{0, r_I-d+1\}$.
\item[$(3)$] $\displaystyle \sum_{m=-1}^{r_I-1}\ell_{T_{\p}}(K^{(m)}_{\p})=r_I-k_I=|\Lambda|$ where ${\p}=\m T $.
\end{itemize}
\end{prop}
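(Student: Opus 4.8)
The plan is to establish assertions (1), (2), (3) in turn, reading everything off the sequences $(\dagger_m)$ together with the length data recorded in Section 2. I begin with (1), whose heart is the identity $\ell_A(I^{m+1}/QI^m+I^{m+2})=1$ for $1\le m\le r_I-1$. The bound $\le 1$ is Lemma \ref{powers}(3). For non-vanishing I argue by Nakayama: since $I$ is $\m$-primary we have $I\subseteq\m$, so $I^{m+2}=I\cdot I^{m+1}\subseteq\m I^{m+1}$; if $I^{m+1}=QI^m+I^{m+2}$ then $I^{m+1}=QI^m+\m I^{m+1}$, and Nakayama applied to the finitely generated module $I^{m+1}/QI^m$ forces $I^{m+1}=QI^m$, against $m<r_I$. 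Hence the length is exactly $1$, so $I^{m+1}/QI^m+I^{m+2}\cong A/\m$ and $\overline{D^{(m)}}\cong(A/\m)[X_1,\dots,X_d]=B$ as graded $F$-modules. Substituting this into $(\dagger_m)$ yields the displayed four-term sequences for $1\le m\le r_I-2$. For $m=r_I-1$, the equality $I^{r_I+1}=QI^{r_I}$ propagates to $I^{n+1}=Q^{n-r_I+1}I^{r_I}$ for all $n\ge r_I$, so $S^{(r_I)}=0$ by Lemma \ref{fact1}(2); hence $\overline{S^{(r_I)}}=0$ and the last sequence is three-term, as asserted.

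Two length identities drive both (2) and (3). Since $A$ is Cohen-Macaulay and $Q$ is a minimal reduction, $\ell_A(A/Q)=\e_0(Q)=\e_0(I)$; combined with $\ell_A(I^2+Q/Q)=k_I-1$ from Lemma \ref{length}(1), this gives $\ell_A(A/(I^2+Q))=\e_0(I)-k_I+1$ and therefore $\ell_A(I/(I^2+Q))=\e_0(I)-\ell_A(A/I)-k_I+1$.

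For (2), I substitute $\ell_A(I^{m+1}/QI^m+I^{m+2})=1$ $(1\le m\le r_I-1)$ into Proposition \ref{function}(1); the range $n\ge\max\{0,r_I-d+1\}$ carries over. The coefficient of $\binom{n+d-1}{d-1}$ becomes $\ell_A(A/I^2+Q)+(r_I-1)=\e_0(I)+r_I-k_I$. For $k\ge 2$ every summation index has $m\ge 1$, so the hockey-stick identity $\sum_{j=k}^{r_I}\binom{j}{k}=\binom{r_I+1}{k+1}$ collapses the coefficient to $(-1)^k\binom{r_I+1}{k+1}$. The delicate case is $k=1$, where the summation range reaches $m=0$ and that term equals $\ell_A(I/(I^2+Q))$ rather than $1$; adding it to $\sum_{m=1}^{r_I-1}(m+1)=\binom{r_I+1}{2}-1$ and using $\binom{r_I+1}{2}=\binom{r_I}{2}+r_I$ produces exactly $\e_0(I)-\ell_A(A/I)+\binom{r_I}{2}+r_I-k_I$, matching the stated coefficient of $\binom{n+d-2}{d-2}$.

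For (3), I read the multiplicity off Proposition \ref{function}(2): with $\sum_{m=1}^{r_I-1}\ell_A(I^{m+1}/QI^m+I^{m+2})=r_I-1$ (by (1)) and $\ell_A(A/I^2+Q)=\e_0(I)-k_I+1$ (above), solving for the error term gives $\sum_{m=-1}^{r_I-1}\ell_{T_\p}(K^{(m)}_\p)=\ell_A(A/I^2+Q)+(r_I-1)-\e_0(I)=r_I-k_I$, and $r_I-k_I=|\Lambda|$ is Proposition \ref{sum}(1). I expect the only real friction to be the bookkeeping in (2)—isolating the anomalous $m=0$ term in the $k=1$ coefficient and reconciling it with the asserted closed form—since the structural inputs for (1), namely the isomorphism $\overline{D^{(m)}}\cong B$ and the vanishing $\overline{S^{(r_I)}}=0$, are immediate once the length-one fact is secured.
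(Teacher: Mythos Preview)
Your proof is correct and follows essentially the same approach as the paper: establish $\ell_A(I^{m+1}/QI^m+I^{m+2})=1$ to get $\overline{D^{(m)}}\cong B$, then read everything off Proposition~\ref{function}. The paper's own proof is a two-line sketch (``Since $\ell_A(I^{m+1}/QI^m+I^{m+2})=1$ we have $\overline{D^{(m)}}\cong B$ \dots\ other conditions follow by Proposition~\ref{function}''), and you have supplied precisely the omitted computations---the Nakayama argument for non-vanishing, the vanishing $S^{(r_I)}=0$, and the binomial bookkeeping for the coefficients in~(2)---all of which check out.
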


\begin{proof}
Since $\ell_A(I^{m+1}/QI^m+I^{m+2})=1$ we have $\overline{D^{(m)}} \cong B$ as graded $F$-modules for all $1 \leq m \leq r_I-1$.
Thus other conditions follow by Proposition \ref{function} $($\cite[Proposition 4.1]{O3} also$)$.
\end{proof}

Let us now give one remark.

\begin{rem}\label{-1}
Let $n \geq 1$ be an integer.
Then $Q^{n} \cap I^{n+1}=Q^nI$ holds true if and only if $K^{(-1)}_n=(0)$.
\end{rem}

The following results play a key role for a proof of Theorem \ref{main} in Section 4.

\begin{prop}\label{ker}{$($\cite[Proposition 3.9]{O3}$)$}
Let $m \geq 0$ and $n \geq 1$ be integers. 
Then $Q^{n}I^{m+1} \cap I^{n+m+2} \subseteq Q^{n+1}I^m+Q^{n}I^{m+2}$ holds true if $K^{(m)}_n=(0)$.
\end{prop}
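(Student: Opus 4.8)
The plan is to reinterpret the cohomological hypothesis $K^{(m)}_n=(0)$ as the injectivity of an explicit multiplication map, and then to run a short diagram chase against the intersection $Q^{n}I^{m+1}\cap I^{n+m+2}$. Assume first $m\geq 1$. Reading the exact sequence $(\dagger_m)$ in total degree $n+m$---so that the shift $(-m)$ turns the leading term $K^{(m)}(-m)$ into $K^{(m)}_n$---produces the exact sequence
$$ 0 \to K^{(m)}_n \to \overline{D^{(m)}}_n \overset{\varphi_{m}}{\to} \overline{S^{(m)}}_{n+m} \to \overline{S^{(m+1)}}_{n+m} \to 0. $$
Hence $K^{(m)}_n=(0)$ says exactly that $\varphi_{m}\colon \overline{D^{(m)}}_n \to \overline{S^{(m)}}_{n+m}$ is injective, and the whole proof reduces to exploiting this injectivity.

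My next step is to make the two flanking modules and the map $\varphi_{m}$ fully explicit. Recall $\overline{D^{(m)}}\cong (I^{m+1}/[QI^m+I^{m+2}])[X_1,\dots,X_d]$, so $\overline{D^{(m)}}_n$ is the free $(I^{m+1}/[QI^m+I^{m+2}])$-module on the monomials $X^{\alpha}$ with $|\alpha|=n$. On the target side, Lemma \ref{fact1} gives $S^{(m)}_{n+m}\cong I^{n+m+1}/Q^{n+1}I^m$, whence $\overline{S^{(m)}}_{n+m}\cong I^{n+m+1}/[I^{n+m+2}+Q^{n+1}I^m]$. Finally, since $\theta_m$ is the $T$-linear map extending the identity on $S^{(m)}_m$ and sending $X_i\mapsto a_it$, the induced map $\varphi_{m}$ carries the class of $\sum_{|\alpha|=n}\xi_\alpha X^{\alpha}$, with $\xi_\alpha\in I^{m+1}$, to the class of $\sum_{|\alpha|=n}a^{\alpha}\xi_\alpha$ modulo $I^{n+m+2}+Q^{n+1}I^m$, where $a^{\alpha}=a_1^{\alpha_1}\cdots a_d^{\alpha_d}$.

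Now I would carry out the chase. Take $\eta\in Q^{n}I^{m+1}\cap I^{n+m+2}$. Since $Q^n$ is generated by the monomials $a^{\alpha}$ of degree $n$, write $\eta=\sum_{|\alpha|=n}a^{\alpha}\xi_\alpha$ with $\xi_\alpha\in I^{m+1}$, and set $z=\sum_{|\alpha|=n}\overline{\xi_\alpha}\,X^{\alpha}\in \overline{D^{(m)}}_n$. By the description of $\varphi_{m}$, the element $\varphi_{m}(z)$ is the class of $\eta$ in $\overline{S^{(m)}}_{n+m}$; as $\eta\in I^{n+m+2}$ this class is zero. Injectivity of $\varphi_{m}$ forces $z=0$, i.e. $\xi_\alpha\in QI^m+I^{m+2}$ for every $\alpha$. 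Substituting back, $\eta\in \sum_{|\alpha|=n}a^{\alpha}(QI^m+I^{m+2})\subseteq Q^n(QI^m+I^{m+2})=Q^{n+1}I^m+Q^{n}I^{m+2}$, which is the asserted inclusion. The remaining case $m=0$ is handled identically, with $(\dagger_0)$ replacing $(\dagger_m)$: there $[R/IR+T]_1\otimes F$ plays the role of $\overline{D^{(0)}}$, its coefficient module $I/[Q+I^2]$ being precisely the $m=0$ instance of $I^{m+1}/[QI^m+I^{m+2}]$, and the same substitution argument applies.

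I expect the only genuine difficulty to lie in the explicit identification carried out in the second step: one must verify carefully that, under the isomorphisms for $\overline{D^{(m)}}_n$ and $\overline{S^{(m)}}_{n+m}$, the map $\varphi_{m}$ really is ``replace $X_i$ by $a_i$ and reduce,'' and that the denominator of $\overline{S^{(m)}}_{n+m}$ is exactly $I^{n+m+2}+Q^{n+1}I^m$ (so that $\eta\in I^{n+m+2}$ alone annihilates its class). Keeping the degree shifts in $(\dagger_m)$ straight---and treating the slightly different indexing of $(\dagger_0)$ for $m=0$---is the main bookkeeping hazard; once these are settled, the monomial expansion of $Q^n$ makes every remaining implication immediate.
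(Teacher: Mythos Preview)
The paper does not supply its own proof of this proposition; it simply cites \cite[Proposition 3.9]{O3}. So there is nothing in the present paper to compare your argument against.

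That said, your proof is correct and is precisely the natural direct argument one would expect. The key identifications are all right: reading $(\dagger_m)$ in total degree $n+m$ gives injectivity of $\varphi_m$ in that degree; the target $\overline{S^{(m)}}_{n+m}$ is indeed $I^{n+m+1}/(I^{n+m+2}+Q^{n+1}I^m)$ because $(IS^{(m)})_{n+m}=I\cdot S^{(m)}_{n+m}$; and $\overline{D^{(m)}}_n$ really is a free $(I^{m+1}/[QI^m+I^{m+2}])$-module on the degree-$n$ monomials, so $z=0$ forces each coefficient into $QI^m+I^{m+2}$. The monomial expansion $\eta=\sum_{|\alpha|=n}a^{\alpha}\xi_\alpha$ of an element of $Q^nI^{m+1}$ is legitimate since $Q^n$ is generated by the $a^{\alpha}$, and the back-substitution $\eta\in Q^n(QI^m+I^{m+2})=Q^{n+1}I^m+Q^nI^{m+2}$ is immediate. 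Your handling of the $m=0$ case via $(\dagger_0)$ is also correct: there the shift is $(-1)$, the source in degree $n+1$ is $[R/IR+T]_1\otimes F_n\cong (I/[Q+I^2])\otimes F_n$, and the target $[R/IR+T]_{n+1}=I^{n+1}/(I^{n+2}+Q^{n+1})$ is exactly the $m=0$ specialization of the general formula, so the identical substitution argument goes through.
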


\begin{cor}\label{beta}{$($\cite[Corollary 3.10]{O3}$)$}
Let $n \geq \ell \geq 1$ be integers.
Assume that $K^{(m)}_{n-m-1}=(0)$ for all $-1 \leq m \leq n-\ell-1$.
Then we have $Q^{\ell}I^{n-\ell} \cap I^{n+1}=Q^{\ell}I^{n-\ell+1}$.
We especially have $QI^{n-1} \cap I^{n+1}=QI^n$ if $K^{(m)}_{n-m-1}=(0)$ for all $-1 \leq m \leq n-2$.
\end{cor}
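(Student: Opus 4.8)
The plan is to reformulate Proposition \ref{ker} by a change of indices and then run a descending induction that, at each step, trades one factor of $I$ for a factor of $Q$, with Remark \ref{-1} supplying the base case. First I would dispose of the trivial inclusion: since $Q^{\ell}I^{n-\ell+1} \subseteq Q^{\ell}I^{n-\ell}$ and $Q^{\ell}I^{n-\ell+1} \subseteq I^{\ell}I^{n-\ell+1}=I^{n+1}$, one always has $Q^{\ell}I^{n-\ell+1} \subseteq Q^{\ell}I^{n-\ell} \cap I^{n+1}$, so only the reverse inclusion requires argument.

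Next I would put Proposition \ref{ker} into a convenient form. Applying it with the exponent $s=n-m-1$ playing the role of its ``$n$'' and then substituting $p=n-m-1$, one gets, for every $p$ with $\ell \leq p \leq n-1$, the inclusion $Q^{p}I^{n-p} \cap I^{n+1} \subseteq Q^{p+1}I^{n-p-1}+Q^{p}I^{n-p+1}$, which is legitimate precisely because the relevant kernel $K^{(n-p-1)}_{p}=(0)$ corresponds to $m=n-p-1$ lying in the assumed range $0 \leq m \leq n-\ell-1$. I would record this inclusion as $(\star)$; note the side conditions $m\ge 0$ and $s=n-m-1\ge 1$ of Proposition \ref{ker} hold automatically since $\ell \geq 1$.

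I would then prove, by descending induction on $p$ from $p=n$ down to $p=\ell$, the statement $P(p)\colon Q^{p}I^{n-p} \cap I^{n+1}=Q^{p}I^{n-p+1}$. The base case $P(n)$ is the equality $Q^{n} \cap I^{n+1}=Q^{n}I$, which is exactly Remark \ref{-1} applied with $K^{(-1)}_{n}=(0)$ (available because $m=-1$ lies in the assumed range, as $\ell \leq n$). For the inductive step, assuming $P(p+1)$ and taking $z \in Q^{p}I^{n-p} \cap I^{n+1}$, I would use $(\star)$ to write $z=a+b$ with $a \in Q^{p+1}I^{n-p-1}$ and $b \in Q^{p}I^{n-p+1}$. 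Since $b \in Q^{p}I^{n-p+1} \subseteq I^{n+1}$, the element $a=z-b$ lies in $Q^{p+1}I^{n-p-1} \cap I^{n+1}=Q^{p+1}I^{n-p}$ by $P(p+1)$; and the absorption $Q^{p+1}I^{n-p}=Q^{p}(QI^{n-p}) \subseteq Q^{p}I^{n-p+1}$ then forces $z=a+b \in Q^{p}I^{n-p+1}$, giving $P(p)$. Specializing to $p=\ell$ yields the asserted equality, and the ``especially'' clause is the case $\ell=1$.

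The main obstacle is bookkeeping rather than conceptual: one must verify that the single hypothesis $K^{(m)}_{n-m-1}=(0)$ for $-1 \leq m \leq n-\ell-1$ furnishes exactly the vanishing consumed by the argument, namely $m=-1$ for the base case and $0 \leq m \leq n-\ell-1$ (equivalently $\ell \leq p \leq n-1$) for the inductive steps. Once these index ranges are aligned, the element chase in the inductive step is routine.
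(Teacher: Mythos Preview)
Your argument is correct. The paper itself does not supply a proof of this corollary; it merely cites \cite[Corollary 3.10]{O3}. Your descending induction on $p$ from $p=n$ down to $p=\ell$, using Remark \ref{-1} for the base case $Q^{n}\cap I^{n+1}=Q^{n}I$ and Proposition \ref{ker} (reindexed via $p=n-m-1$) for the inductive step, is exactly the natural way to deduce the corollary from the two ingredients the paper places immediately before it, and the index bookkeeping you carry out is accurate: the hypothesis $K^{(m)}_{n-m-1}=(0)$ for $-1\le m\le n-\ell-1$ provides precisely $K^{(-1)}_{n}=(0)$ for the base and $K^{(n-p-1)}_{p}=(0)$ for each $\ell\le p\le n-1$ needed in the descent.
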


\section{The first Hilbert coefficient of stretched ideals}

The purpose of this section is to explore the first Hilbert coefficient $\e_1(I)$ of stretched $\m$-primary ideals $I$ in terms of $k_I$.
Thanks to \cite[Theorem 4.7]{HM} we can get the following lower bound of it as follows.

\begin{prop}\label{CM}
Suppose that $I$ is stretched.
Then the inequality
$$ \e_1(I) \geq \e_0(I)-\ell_A(A/I)+\binom{k_I}{2} $$
holds true and the following conditions are equivalent to each other:
\begin{itemize}
\item[$(1)$] $ \e_1(I) = \e_0(I)-\ell_A(A/I)+\binom{k_I}{2}$,
\item[$(2)$] $r_I=k_I$, and
\item[$(3)$] $G$ is Cohen-Macaulay.
\end{itemize}
When this is the case, the following assertions are also satisfied:
\begin{itemize}
\item[$(i)$] $\ell_A(I^{n+1}/QI^n)=k_I-n$ for all $1 \leq n \leq k_I$,
\item[$(ii)$] $\displaystyle \e_k(I)=\binom{k_I+1}{k+1}$ for $2 \leq k \leq d$, and
\item[$(iii)$] the Hilbert series $ HS_I(z)$ of $I$ is given by {\small { $$ HS_I(z)=\frac{\ell_A(A/I)+\{\e_0(I)-\ell_A(A/I)-k_I+1\}z+z^2+z^3+\cdots+z^{k_I}}{(1-z)^d}.$$}} 
\end{itemize}
\end{prop}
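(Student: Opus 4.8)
The plan is to sandwich $\e_1(I)$ between the general Huckaba--Marley lower bound and the explicit length data available for stretched ideals, and then to route all three equivalences through the known characterization of the Cohen--Macaulayness of $G$ (Rossi--Valla \cite{RV3}: for stretched $I$, $G$ is Cohen--Macaulay if and only if $I^{k_I+1}=QI^{k_I}$) together with the reduction-number bookkeeping of Proposition~\ref{sum}. The closing formulas (i)--(iii) will then be read off from the $h$-vector produced by the Cohen--Macaulay structure.

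For the inequality I would invoke the Huckaba--Marley lower bound \cite{HM}, which yields in particular $\e_1(I)\ge\sum_{n\ge1}\ell_A\big(I^n/(Q\cap I^n)\big)$. Using the isomorphism $I^n/(Q\cap I^n)\cong (I^n+Q)/Q$, the $n=1$ term is $\ell_A(I/Q)=\e_0(I)-\ell_A(A/I)$, while Lemma~\ref{length}(1) evaluates the remaining terms as $\ell_A\big((I^n+Q)/Q\big)=k_I-n+1$ for $2\le n\le k_I$ and $0$ for $n>k_I$. Summing gives $\sum_{n\ge1}\ell_A\big((I^n+Q)/Q\big)=\big(\e_0(I)-\ell_A(A/I)\big)+\sum_{j=1}^{k_I-1}j=\e_0(I)-\ell_A(A/I)+\binom{k_I}{2}$, which is exactly the asserted bound. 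The merit of this route is that the computed value depends only on $k_I$, not on $r_I$.

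For the equivalences I would establish $(2)\Leftrightarrow(3)$ and $(2)\Rightarrow(1)$ directly, leaving $(1)\Rightarrow(2)$ as the crux. If $r_I=k_I$ then $I^{k_I+1}=I^{r_I+1}=QI^{r_I}=QI^{k_I}$, so $G$ is Cohen--Macaulay by Rossi--Valla; conversely Cohen--Macaulayness forces $I^{k_I+1}=QI^{k_I}$, i.e. $r_I\le k_I$, and since $k_I\le r_I$ always holds for stretched ideals we get $r_I=k_I$; this settles $(2)\Leftrightarrow(3)$. Next, $r_I=k_I$ forces $\Lambda=\emptyset$ by Proposition~\ref{sum}(1), whence $\sum_{n\ge1}\ell_A(I^{n+1}/QI^n)=\binom{r_I}{2}=\binom{k_I}{2}$ by Proposition~\ref{sum}(2); since $G$ is then Cohen--Macaulay (hence $\depth G\ge d-1$), Corollary~\ref{HM} gives $\e_1(I)=\e_0(I)-\ell_A(A/I)+\binom{k_I}{2}$, i.e. $(1)$. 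For the remaining implication I would translate $(1)$ through the Sally module: by Proposition~\ref{S2}(2) and $\ell_A(I^2/QI)=k_I-1$ one has $\ell_{T_\p}(S^{(2)}_\p)=\e_1(I)-\e_0(I)+\ell_A(A/I)-(k_I-1)$, so $(1)$ is equivalent to $\ell_{T_\p}(S^{(2)}_\p)=\binom{k_I-1}{2}$, which is the minimal possible value by the inequality just proved. The task is then to show this minimum is attained only when $r_I=k_I$; I would do this by filtering $S^{(2)}$ via Lemma~\ref{VP} and feeding the exact sequences $(\dagger_m)$ into the count, the point being that the defect $\ell_{T_\p}(S^{(2)}_\p)-\binom{k_I-1}{2}$ is governed by $\sum_{m}\ell_{T_\p}(K^{(m)}_\p)=r_I-k_I$ of Proposition~\ref{functionS}(3), which vanishes exactly when $r_I=k_I$. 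This last step --- pinning the equality condition by controlling the modules $K^{(m)}$ --- is where I expect the real work to lie.

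Finally, under the equivalent conditions I read off (i)--(iii). Since $\Lambda=\emptyset$, Lemma~\ref{length}(2) gives $\ell_A(I^{n+1}/QI^n)=\ell_A(I^n/QI^{n-1})-1$ for every $n\ge2$, so starting from $\ell_A(I^2/QI)=k_I-1$ we obtain $\ell_A(I^{n+1}/QI^n)=k_I-n$ for $1\le n\le k_I$, which is (i). Because $G$ is Cohen--Macaulay, the images of a minimal reduction form a regular sequence of linear forms and the tail sums of the $h$-vector satisfy $\sum_{j\ge n}h_j=\ell_A(I^n/QI^{n-1})$; hence $h_0=\ell_A(A/I)$, $h_1=\big(\e_0(I)-\ell_A(A/I)\big)-(k_I-1)$, and $h_n=\ell_A(I^n/QI^{n-1})-\ell_A(I^{n+1}/QI^n)=1$ for $2\le n\le k_I$, giving the numerator in (iii). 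Then (ii) follows from $\e_k(I)=\sum_{i\ge k}\binom{i}{k}h_i=\sum_{i=k}^{k_I}\binom{i}{k}=\binom{k_I+1}{k+1}$ by the hockey-stick identity.
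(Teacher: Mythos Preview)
Your derivation of the inequality is the same as the paper's: both feed Lemma~\ref{length}(1) into the Huckaba--Marley lower bound. Your treatment of $(2)\Leftrightarrow(3)$ via Rossi--Valla and of $(2)\Rightarrow(1)$ via Proposition~\ref{sum} and Corollary~\ref{HM} is correct, and your computation of (i)--(iii) from the Cohen--Macaulay $h$-vector is a clean direct argument where the paper instead cites \cite[Corollary~4.2]{O3}.

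The one place you diverge substantively is $(1)\Rightarrow(3)$, and here you are making life much harder than necessary. The same result you invoked for the inequality, \cite[Theorem~4.7]{HM}, already contains the equality characterization: one has $\e_1(I)\ge\sum_{n\ge 0}\ell_A\big((I^{n+1}+Q)/Q\big)$ with equality \emph{if and only if} $G$ is Cohen--Macaulay. Since the right-hand side is exactly $\e_0(I)-\ell_A(A/I)+\binom{k_I}{2}$ by the computation you carried out, condition~(1) is immediately equivalent to~(3). That is all the paper does.

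Your Sally-module sketch, by contrast, is incomplete as written. The claim that the defect $\ell_{T_\p}(S^{(2)}_\p)-\binom{k_I-1}{2}$ ``is governed by'' $\sum_m\ell_{T_\p}(K^{(m)}_\p)=r_I-k_I$ is not a precise relation: telescoping the exact sequences of Proposition~\ref{functionS}(1) gives $\ell_{T_\p}(\overline{S^{(2)}_\p})=(r_I-2)-\sum_{m=2}^{r_I-1}\ell_{T_\p}(K^{(m)}_\p)$, but you still have the gap $\ell_{T_\p}(S^{(2)}_\p)\ge\ell_{T_\p}(\overline{S^{(2)}_\p})$ to control, and the resulting lower bound $k_I-2$ falls below $\binom{k_I-1}{2}$ once $k_I\ge 4$, so the argument does not close without further input. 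You correctly sensed that ``the real work'' would lie here; the point is that no such work is needed, because Huckaba--Marley already hands you the equivalence.
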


\begin{proof}
We have $\ell_A(I^{n+1}+Q/Q)=k_I-n$ for all $1 \leq n \leq k_I$ by Lemma \ref{length} $(1)$.
Then, because $\e_1(I) \geq \e_0(I)-\ell_A(A/I)+\sum_{n \geq 1}\ell_A(I^{n+1}+Q/Q)$ by \cite[Theorem 4.7]{HM}, we get the required inequality.

The equivalence $(1) \Leftrightarrow (3)$ is satisfied by \cite[Theorem 4.7]{HM}, and other conditions $(i)$, $(ii)$, and $(iii)$ hold true by \cite[Cororally 4.2]{O3} $($see also \cite[Corollary 2.4]{S1}, \cite[Theorem 2.6]{RV3}$)$. 
\end{proof}

We can immediately get the following result of stretched $\m$-primary ideals $I$ with $k_I=3$ and the first Hilbert coefficient $\e_1(I)$ attains to the lower bound, that is the equality $\e_1(I)=\e_0(I)-\ell_A(A/I)+3$ holds true, as a corollary of Proposition \ref{CM}.

\begin{cor}\label{+3}
Suppose that $I$ is stretched and assume that the equality $\e_0(I)=\ell_A(I/I^2)-(d-1)\ell_A(A/I)+2$ $($i.e. $k_I=3)$ holds true.
Then the following three conditions are equivalent:
\begin{itemize}
\item[$(1)$] $\e_1(I)=\e_0(I)-\ell_A(A/I)+3$,
\item[$(2)$] $I^4=QI^3$, and
\item[$(3)$] $G$ is Cohen-Macaulay.
\end{itemize}
When this is the case, the following assertions also follow:
\begin{itemize}
\item[$(i)$] $\ell_A(I^3/QI^2)=1$, $r_I=3$, and $\Lambda = \emptyset$,
\item[$(ii)$] $\e_2(I)=4$ if $d \geq 3$, $\e_3(I)=1$ if $d \geq 3$, and $\e_k(I)=0$ for $4 \leq k \leq d$, and
\item[$(iii)$] the Hilbert series $HS_I(z)$ of $I$ is given by {\small { $$ HS_I(z)=\frac{\ell_A(A/I)+\{\e_0(I)-\ell_A(A/I)-2\}z+z^2+z^3}{(1-z)^d}.$$}} 
\end{itemize}
\end{cor}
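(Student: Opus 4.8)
The plan is to obtain the entire statement as a specialization of Proposition~\ref{CM} to the case $k_I = 3$. With $k_I = 3$ one has $\binom{k_I}{2} = 3$, so condition $(1)$ of the corollary, $\e_1(I) = \e_0(I) - \ell_A(A/I) + 3$, is exactly condition $(1)$ of Proposition~\ref{CM}, and condition $(3)$ of the corollary (that $G$ is Cohen-Macaulay) is verbatim condition $(3)$ there. Hence the equivalence $(1) \Leftrightarrow (3)$ is immediate, and the only work is to identify condition $(2)$ of the corollary, $I^4 = QI^3$, with the reduction-number condition $r_I = k_I$ of Proposition~\ref{CM}.

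First I would treat this identification. By definition $r_I = \min\{n \geq 0 \mid I^{n+1} = QI^n\}$, so the equality $I^4 = QI^3$ is precisely the assertion $r_I \leq 3$. Recall from the opening remark of Section~2 that $k_I \leq r_I$ for every stretched $\m$-primary ideal; since we assume $k_I = 3$, this forces $r_I \geq 3$. Combining the two inequalities gives $r_I = 3 = k_I$, which is condition $(2)$ of Proposition~\ref{CM}. Conversely $r_I = k_I = 3$ yields $I^4 = QI^3$ at once. This shows $(2)$ of the corollary is equivalent to $(2)$ of Proposition~\ref{CM}, completing the chain of equivalences.

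Granting the equivalent conditions, the remaining assertions follow by substituting $k_I = 3$ (and $r_I = 3$) into the corresponding conclusions of Proposition~\ref{CM}. For $(i)$, the formula $\ell_A(I^{n+1}/QI^n) = k_I - n$ at $n = 2$ gives $\ell_A(I^3/QI^2) = 1$, the value $r_I = 3$ comes from the equivalence just proved, and $\Lambda = \emptyset$ follows because $|\Lambda| = r_I - k_I = 0$ by Proposition~\ref{sum}~$(1)$. For $(ii)$, the formula $\e_k(I) = \binom{k_I+1}{k+1} = \binom{4}{k+1}$ evaluates to $\binom{4}{3} = 4$ at $k = 2$, to $\binom{4}{4} = 1$ at $k = 3$, and to $0$ for $4 \leq k \leq d$ (since then $k+1 \geq 5 > 4$). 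For $(iii)$, putting $k_I = 3$ into the Hilbert series of Proposition~\ref{CM} replaces the numerator $\ell_A(A/I) + \{\e_0(I) - \ell_A(A/I) - k_I + 1\}z + z^2 + \cdots + z^{k_I}$ by $\ell_A(A/I) + \{\e_0(I) - \ell_A(A/I) - 2\}z + z^2 + z^3$, as claimed.

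Since the argument is a direct specialization, there is no substantial obstacle; the only step requiring genuine (if minor) care is the matching of $I^4 = QI^3$ with $r_I = k_I$, where one must use the inequality $k_I \leq r_I$ rather than reading off $r_I = 3$ from $I^4 = QI^3$ alone.
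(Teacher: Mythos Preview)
Your proof is correct and follows exactly the approach the paper intends: the corollary is stated immediately after Proposition~\ref{CM} with the remark that it is obtained ``as a corollary of Proposition~\ref{CM}'', and your argument simply fleshes out that specialization to $k_I=3$, including the one nontrivial identification $I^4=QI^3 \Leftrightarrow r_I=k_I$ via the inequality $k_I\le r_I$.
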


The following example satisfies the conditions of Corollary \ref{+3}.

\begin{ex}
Let $A=k[[u^{7},u^{15},u^{18},u^{26},u^{27}]]$ and $Q=(u^7)$.
Then $Q$ is a reduction of the maximal ideal $\m$ of $A$, $\mu(\m)=5$, $\m^2=Q\m+(u^{30})$, and $k_{\m}=r_{\m}=3$.
Therefore, $\ell_A(A/\m^{n+1})=7(n+1)-9$ for all $n \geq 2$ so that $\e_1(\m)=\e_0(\m)+2$ and $\rmG(\m)$ is Cohen-Macaulay.
\end{ex}

It is natural to ask what happens for stretched $\m$-primary ideals $I$ with $k_I=3$ and $\e_1(I)$ attains almost minimal value, that is the case where the equality $\e_1(I)=\e_0(I)-\ell_A(A/I)+4$ holds true.

The following result plays a key role for our proof of the main theorem of this paper.

\begin{thm}\label{main}
Suppose that $I$ is stretched and assume that the equality $\e_0(I)=\ell_A(I/I^2)-(d-1)\ell_A(A/I)+2$ $($i.e. $k_I=3)$ is satisfied.
Then the following two conditions are equivalent:
\begin{itemize}
\item[$(1)$] $\e_1(I)=\e_0(I)-\ell_A(A/I)+4$,
\item[$(2)$] $\ell_A(I^3/QI^2)=\ell_A(I^4/QI^3)=1$ and $I^5=QI^4$.
\end{itemize}

When this is the case, the following assertions hold true:

\begin{itemize}
\item[$(i)$] $\Lambda=\{3\}$,
\item[$(ii)$] $\e_2(I)=7$ if $d \geq 2$, $\e_3(I)=4$ if $d \geq 3$, $\e_4(I)=1$ if $d \geq 4$, and $\e_k(I)=0$ for all $5 \leq k \leq d$,
\item[$(iii)$] the Hilbert series $HS_I(z)$ of $I$ is given by {\small { $$ HS_I(z)=\frac{\ell_A(A/I)+\{\e_0(I)-\ell_A(A/I)-2\}z+z^2+z^4}{(1-z)^d}, \ \  \mbox{and}$$}} 
\item[$(iv)$] $\depth G=d-1$.
\end{itemize}
\end{thm}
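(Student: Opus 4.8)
The plan is to translate condition $(1)$ into a statement about the Sally module $S^{(2)}$ and then to extract everything from the filtration of Section 3. Since $I$ is stretched with $k_I=3$, Lemma \ref{length}\,$(1)$ gives $\ell_A(I^2/QI)=k_I-1=2$, so Proposition \ref{S2}\,$(2)$ yields
\[\ell_{T_{\p}}(S^{(2)}_{\p})=\e_1(I)-\e_0(I)+\ell_A(A/I)-2,\]
where $\p=\m T$. Hence $(1)$ is equivalent to $\ell_{T_{\p}}(S^{(2)}_{\p})=2$. With this dictionary the implication $(2)\Rightarrow(1)$ is immediate: condition $(2)$ forces $r_I=4$, so $r_I\neq k_I$ and the equality case of Proposition \ref{CM} gives $\e_1(I)\geq\e_0(I)-\ell_A(A/I)+4$; at the same time the length data in $(2)$ give $\Lambda=\{3\}$ via Lemma \ref{length}\,$(2)$, so $\sum_{n\geq1}\ell_A(I^{n+1}/QI^n)=4$ by Proposition \ref{sum}, and Corollary \ref{HM} delivers the reverse inequality $\e_1(I)\leq\e_0(I)-\ell_A(A/I)+4$.

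For $(1)\Rightarrow(2)$ I would argue in two steps. First, $(1)$ and the equality case of Proposition \ref{CM} exclude $r_I=3$, and since $r_I\geq k_I=3$ always, we get $r_I\geq4$. Second, I would bound $r_I$ from above through the filtration $0\to L^{(m)}\to S^{(m)}\to S^{(m+1)}\to0$ of Lemma \ref{VP}\,$(1)$. Since $S^{(r_I)}=0$, and for $2\leq m\leq r_I-1$ the module $L^{(m)}$ has $\dim_TL^{(m)}=d$ (its degree-$n$ component is $Q^{n-m}I^{m+1}/Q^{n-m+1}I^m$, of growth degree $d-1$ because $I^{m+1}\neq QI^m$), each $L^{(m)}_{\p}\neq0$ and therefore
\[\ell_{T_{\p}}(S^{(2)}_{\p})=\sum_{m=2}^{r_I-1}\ell_{T_{\p}}(L^{(m)}_{\p})\geq r_I-2.\]
Combined with $\ell_{T_{\p}}(S^{(2)}_{\p})=2$ this forces $r_I\leq4$, hence $r_I=4$.

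The main obstacle is the remaining point of $(1)\Rightarrow(2)$: showing $\Lambda=\{3\}$ rather than $\Lambda=\{2\}$, equivalently $\ell_A(I^3/QI^2)=1$ rather than $2$, as both are a priori compatible with $r_I=4$, $k_I=3$, $|\Lambda|=1$. Here I would exploit the kernels $K^{(m)}$. By Proposition \ref{functionS}\,$(3)$ we have $\sum_{m=-1}^{3}\ell_{T_{\p}}(K^{(m)}_{\p})=r_I-k_I=1$, and since $\Ass_TK^{(m)}\subseteq\{\p\}$ forces $\ell_{T_{\p}}(K^{(m)}_{\p})=0$ to imply $K^{(m)}=0$, exactly one kernel $K^{(m_0)}$ is nonzero, with $\ell_{T_{\p}}(K^{(m_0)}_{\p})=1$. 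Extracting the coefficient of $\binom{n+d-2}{d-2}$ from the Hilbert-function formula in Proposition \ref{functionS}\,$(2)$ expresses $\e_1(I)$ through $r_I$, $k_I$ and the first two Hilbert coefficients of $K^{(m_0)}$; imposing $(1)$ together with the embedding $K^{(m_0)}\hookrightarrow B$ coming from $(\dagger_{m_0})$ and $\overline{D^{(m_0)}}\cong B$ (valid for $m_0\geq1$) should pin down $m_0=1$. Finally Corollary \ref{beta} converts the vanishing of the surviving kernel pieces into the intersection equalities $QI^{n-1}\cap I^{n+1}=QI^n$; with only $K^{(1)}$ alive this gives $2\notin\Lambda$ and $3\in\Lambda$, i.e.\ $\Lambda=\{3\}$ and $\ell_A(I^3/QI^2)=\ell_A(I^4/QI^3)=1$, which is $(2)$. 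I expect the delicate bookkeeping here — controlling the sub-leading Hilbert coefficient of $K^{(m_0)}$ and excluding the cases $m_0\in\{-1,0\}$, where $\overline{D^{(m_0)}}$ is not $B$ — to be the technical heart of the theorem.

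Once the numerical data $r_I=4$, $k_I=3$, $\Lambda=\{3\}$ are in hand, the four assertions follow. Part $(i)$ is precisely what was just established. For $(iv)$: since $r_I\neq k_I$, Proposition \ref{CM} shows $G$ is not Cohen-Macaulay, so $\depth G\leq d-1$; and since $\e_1(I)=\e_0(I)-\ell_A(A/I)+\sum_{n\geq1}\ell_A(I^{n+1}/QI^n)$ now holds with the sum equal to $4$, Corollary \ref{HM} gives $\depth G\geq d-1$, whence $\depth G=d-1$. Finally, substituting $r_I=4$, $k_I=3$ and the determined Hilbert function of $K^{(1)}$ into Proposition \ref{functionS}\,$(2)$ and collecting terms produces the Hilbert series in $(iii)$; applying the standard relations $\e_k=\sum_{j}\binom{j}{k}h_j$ to the $h$-vector $(\ell_A(A/I),\,\e_0(I)-\ell_A(A/I)-2,\,1,\,0,\,1)$ then yields $\e_2=7$, $\e_3=4$, $\e_4=1$, and $\e_k=0$ for $k\geq5$, which is $(ii)$.
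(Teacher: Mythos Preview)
Your plan reverses the paper's order of attack and, as you yourself flag, leaves the crucial step open. Two concrete problems:

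\textbf{(a) The bound $r_I\le 4$ is not justified.} You assert that $L^{(m)}_{\p}\neq 0$ for $2\le m\le r_I-1$ because the graded pieces $Q^{n-m}I^{m+1}/Q^{n-m+1}I^m$ have ``growth degree $d-1$''. This is not a proof: one needs $\Ass_T S^{(m)}\subseteq\{\p\}$, which Proposition~\ref{S2} gives only for $m=2$. For $m\ge 3$ the module $S^{(m)}$ is a quotient of $S^{(2)}$, and quotients can acquire embedded primes, so $L^{(m)}_{\p}=0$ is not excluded. Your inequality $\ell_{T_\p}(S^{(2)}_\p)\ge r_I-2$ therefore has no basis here.

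\textbf{(b) The exclusion of $\Lambda=\{2\}$ is the whole point, and you do not do it.} You correctly identify that with $r_I=4$, $k_I=3$, $|\Lambda|=1$ both $\Lambda=\{2\}$ and $\Lambda=\{3\}$ are combinatorially possible, and you propose to rule out $m_0\in\{-1,0\}$ via ``sub-leading Hilbert coefficients of $K^{(m_0)}$''. But this is precisely where the argument lives, and you give no mechanism.

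The paper handles both issues simultaneously with one idea you are missing: it proves $IS^{(2)}=0$. From the exact sequences of Proposition~\ref{functionS}\,$(1)$ one gets $\ell_{T_\p}(\overline{S^{(2)}_\p})=1+\sum_{k=-1}^{1}\ell_{T_\p}(K^{(k)}_\p)$; combined with $\ell_{T_\p}(S^{(2)}_\p)=2$ this gives $\sum_{k=-1}^{1}\ell_{T_\p}(K^{(k)}_\p)\le 1$, and equality to $0$ is impossible because Corollary~\ref{beta} would then force $2,3\notin\Lambda$, contradicting $|\Lambda|=r_I-3$ with $\Lambda\subseteq\{2,\dots,r_I-1\}$. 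Hence $\ell_{T_\p}(\overline{S^{(2)}_\p})=2=\ell_{T_\p}(S^{(2)}_\p)$, so $(IS^{(2)})_\p=0$, and since $\Ass_T IS^{(2)}\subseteq\Ass_T S^{(2)}\subseteq\{\p\}$ one concludes $IS^{(2)}=0$. This single vanishing yields $I^{n+1}\subseteq Q^{n-2}I^2\subseteq QI^{n-1}$ for all $n\ge 3$, hence $\{3,\dots,r_I-1\}\subseteq\Lambda$; since $|\Lambda|=r_I-3$ this forces $\Lambda=\{3,\dots,r_I-1\}$ and in particular $2\notin\Lambda$. Now $I^4\subseteq QI^2$ triggers \cite[Proposition~3.1]{RV3} (or \cite[Theorem~3.2]{R}) to give $\depth G\ge d-1$, and \emph{only then} does Corollary~\ref{HM} pin down $r_I=4$. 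So the paper determines $\Lambda$ first and $r_I$ last --- the opposite of your plan --- and the step you call the ``technical heart'' dissolves once $IS^{(2)}=0$ is in hand.
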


\begin{proof}
$(1) \Rightarrow (2)$, $(i)$, $(ii)$, $(iii)$, and $(iv)$:
We have $\sum_{k=-1}^{r_I-1}\ell_{T_{\p}}(K^{(k)}_{\p})=|\Lambda|=r_I-3$ by Proposition \ref{functionS} $(3)$.
The equality 
$$\ell_{T_{\p}}(\overline{S^{(2)}_{\p}}) = \sum_{k=2}^{r_I-1} \ell_{T_{\p}}(B_{\p})-\sum_{k=2}^{r_I-1}\ell_{T_{\p}}(K^{(k)}_{\p})$$ 
holds true by the inductive steps, because the equalities
$$\ell_{T_{\p}}(\overline{S_{\p}^{(m)}})=\ell_{T_{\p}}(B_{\p})-\ell_{T_{\p}}(K^{(m)}_{\p})+\ell_{T_{\p}}(\overline{S^{(m+1)}_{\p}})$$
for $2 \leq m \leq r_I-2$, and 
$$\ell_{T_{\p}}(\overline{S^{(r_I-1)}_{\p}})=\ell_{T_{\p}}(B_{\p})-\ell_{T_{\p}}(K^{(r_I-1)}_{\p})$$
hold true by the exact sequences in Proposition \ref{functionS} $(1)$.
Then, since $\ell_{T_\p}(S^{(2)}_{\p})=\e_1(I)-\e_0(I)+\ell_A(A/I)-\ell_A(I^2/QI)=2$ by Proposition \ref{S2} $(2)$ and $\overline{S^{(2)}_{\p}}$ is a homomorphic image of $S^{(2)}_{\p}$, we have
\begin{eqnarray*}
2 = \ell_{T_{\p}}(S^{(2)}_{\p}) \geq \ell_{T_{\p}}(\overline{S^{(2)}_{\p}}) &=& \sum_{k=2}^{r_I-1} \ell_{T_{\p}}(B_{\p})-\sum_{k=2}^{r_I-1}\ell_{T_{\p}}(K^{(k)}_{\p})\\
&=& r_I-2-\{ r_I-3-\sum_{k=-1}^{1}\ell_{T_{\p}}(K^{(k)}_{\p}) \} \\
& = & 1+ \sum_{k=-1}^{1}\ell_{T_{\p}}(K^{(k)}_{\p}),
\end{eqnarray*}
so that we get $\sum_{k=-1}^{1}\ell_{T_{\p}}(K^{(k)}_{\p}) \leq 1$.
Suppose that $K^{(k)}=(0)$ for all $-1 \leq k \leq 1$, then we have $[QI^{n-1} \cap I^{n+1}]/QI^n=(0)$ for all $n=2, 3$ by Corollary \ref{beta} so that $\Lambda \subseteq \{4,5,\ldots,r_I-1\}$.
However, it is impossible because $|\Lambda|=r_I-3$.
Thus, we get $ \sum_{k=-1}^{1}\ell_{T_{\p}}(K^{(k)}_{\p}) = 1 $ so that $\ell_{T_{\p}}(\overline{S^{(2)}_{\p}})=2$.
Then, because $\displaystyle \ell_{T_{\p}}(IS^{(2)}_{\p})=\ell_{T_{\p}}(S_{\p}^{(2)})-\ell_{T_{\p}}(\overline{S^{(2)}_{\p}})=0$ and $\Ass_TIS^{(2)} \subseteq \Ass_TS^{(2)} =\{\p\}$ by Proposition \ref{S2} $(1)$, we get $IS^{(2)}=(0)$.
Hence $I^{n+1} \subseteq Q^{n-2}I^2 \subseteq QI^{n-1}$ holds true so that $[QI^{n-1} \cap I^{n+1}]/QI^n=I^{n+1}/QI^n \neq (0)$ for all $3 \leq n \leq r_I-1$.
Therefore $\Lambda=\{3,4, \cdots, r_I-1\}$ because $|\Lambda|=r_I-3$.
Then since $2 \notin \Lambda$ we also get $\ell_A(I^3/QI^2)=\ell_A(I^2/QI)-1=1$ by Lemma \ref{length} $(2)$.
Thus, we get $\depth G = d-1$ by \cite[Theorem 3.2]{R} $($or \cite[Proposition 3.1]{RV3} because $I^4 \subseteq QI^2$ is satisfied$)$, and hence the equality 
$$r_I=\binom{r_I}{2}-\sum_{s \in \Lambda}s+|\Lambda|=\e_1(I)-\e_0(I)+\ell_A(A/I)=4 $$
holds true by Corollary \ref{HM} $($c.f. \cite[Theorem 3.1]{Huk} and \cite[Theorem 4.7]{HM}$)$ and $\Lambda=\{3\}$ follows.
Then, thanks to \cite[Theorem 1.1]{O3} $($\cite[Theorem 3.2]{R} and \cite[Corollary 4.3]{RV4} also$)$, the conditions $(ii)$, $(iii)$, and $(iv)$ are also satisfied.

\noindent
$(2) \Rightarrow (1)$:
We have $\Lambda = \{3\}$ by our assumption and Lemma \ref{length} $(2)$.
Thus, we can get the required equality of the first Hilbert coefficient $\e_1(I)$ by Corollary \ref{HM}, because $\e_1(I)> \e_0(I)-\ell_A(A/I)+3$ by Corollary \ref{+3}.
\end{proof}

The following example satisfies the condition of Theorem \ref{main}.

\begin{ex}
Let $A=k[[u^{8},u^{17},u^{21},u^{30},u^{39},u^{52}]]$ and $Q=(u^8)$.
Then $Q$ is a reduction of the maximal ideal $\m$ of $A$, $\mu(\m)=6$, $\m^2=Q\m+(u^{34})$, $\tau(A)=3$, $k_{\m}=3$, and $r_{\m}=4$.
Therefore, $\ell_A(A/\m^{n+1})=8(n+1)-11$ for all $n \geq 3$ so that $\e_1(\m)=\e_0(\m)+3$ and $\rmG(\m)$ is not Cohen-Macaulay.
\end{ex}

\section{Stretched ideals with small first Hilbert coefficient}

In this section let us study the structure of stretched $\m$-primary ideals with small first Hilbert coefficient.

\begin{rem}
Suppose that $I$ is stretched.
Then we notice that the inequality $\e_1(I) \geq \e_0(I)-\ell_A(A/I)+1$ holds true.
\end{rem}

Let us begin with a corollary of Proposition \ref{CM} as follows.
Corollary \ref{rank1} corresponds to the theorem of Sally \cite{S4} which characterized an $\m$-primary ideal $I$ with the equalities $\ell_A(I^2/QI)=1$ and $I^3=QI^2$.

\begin{cor}[c.f. \cite{S4}]\label{rank1}
Suppose that $I$ is stretched then the following conditions are equivalent:
\begin{itemize}
\item[$(1)$] $\e_1(I)=\e_0(I)-\ell_A(A/I)+1$ and
\item[$(2)$] $I^3=QI^2$.
\end{itemize}
When this is the case the following assertions also hold true:
\begin{itemize}
\item[$(i)$] $\ell_A(I^2/QI)=1$ $($i.e. $k_I=r_I=2)$ and $\Lambda =\emptyset$,
\item[$(ii)$] $\e_2(I)=1$ if $d \geq 2$, and $\e_k(I)=0$ for $3 \leq k \leq d$,
\item[$(iii)$] the Hilbert series $ HS_I(z)$ of $I$ is given by {\small { $$ HS_I(z)=\frac{\ell_A(A/I)+\{\e_0(I)-\ell_A(A/I)-1\}z+z^2}{(1-z)^d}, \ \mbox{and}$$}}
\item[$(iv)$] $G$ is Cohen-Macaulay. 
\end{itemize}
\end{cor}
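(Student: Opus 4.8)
The plan is to deduce everything from Proposition~\ref{CM} by showing that each of the two conditions forces $k_I=2$, whereupon the entire statement is merely the specialization $k_I=2$ of that proposition. The one preliminary fact I need is that a stretched ideal always satisfies $k_I\geq 2$: the stretched hypothesis includes $\ell_A(I^2+Q/I^3+Q)=1\neq 0$, which rules out $I^2=QI$ (if $I^2=QI$ then $I^2+Q=Q=I^3+Q$, giving length $0$), and hence $\ell_A(I^2/QI)=k_I-1\geq 1$ by Lemma~\ref{length}~$(1)$. Equivalently, this is exactly the content of the preceding remark, namely $\e_1(I)\geq \e_0(I)-\ell_A(A/I)+1$.

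For the implication $(1)\Rightarrow(2)$, I would run a numerical squeeze. Since $k_I\geq 2$ we have $\binom{k_I}{2}\geq 1$, so the inequality of Proposition~\ref{CM} combined with the hypothesis gives the chain $\e_0(I)-\ell_A(A/I)+1=\e_1(I)\geq \e_0(I)-\ell_A(A/I)+\binom{k_I}{2}\geq \e_0(I)-\ell_A(A/I)+1$. Every inequality must therefore be an equality: first $\binom{k_I}{2}=1$, forcing $k_I=2$, and second $\e_1(I)=\e_0(I)-\ell_A(A/I)+\binom{k_I}{2}$, which is condition $(1)$ of Proposition~\ref{CM}. By the equivalence in that proposition, condition $(2)$ there holds, i.e. $r_I=k_I=2$, which is precisely $I^3=QI^2$.

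For $(2)\Rightarrow(1)$, the equality $I^3=QI^2$ says $r_I\leq 2$. Combining this with $k_I\geq 2$ (stretched, as above) and with $k_I\leq r_I$ (the remark in Section~2) yields $2\leq k_I\leq r_I\leq 2$, so $k_I=r_I=2$. Since $r_I=k_I$, Proposition~\ref{CM} gives $\e_1(I)=\e_0(I)-\ell_A(A/I)+\binom{2}{2}=\e_0(I)-\ell_A(A/I)+1$, which is $(1)$.

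Finally, under these equivalent conditions I would read off the supplementary assertions directly from Proposition~\ref{CM} with $k_I=2$. Assertion $(i)$ is $\ell_A(I^2/QI)=k_I-1=1$ together with $k_I=r_I=2$, while $\Lambda=\emptyset$ follows from Proposition~\ref{sum}~$(1)$ since $|\Lambda|=r_I-k_I=0$. Assertion $(ii)$ is $\e_k(I)=\binom{k_I+1}{k+1}=\binom{3}{k+1}$, which equals $1$ for $k=2$ and $0$ for $3\leq k\leq d$. Assertion $(iii)$ is the Hilbert series of Proposition~\ref{CM}~$(iii)$ evaluated at $k_I=2$, and assertion $(iv)$ is the Cohen-Macaulayness of $G$, which is condition $(3)$ of that proposition. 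The only genuinely nontrivial step is the preliminary bound $k_I\geq 2$ together with the squeeze forcing $k_I=2$; once $k_I=2$ is established there is no real obstacle, as the corollary is exactly a specialization of Proposition~\ref{CM}.
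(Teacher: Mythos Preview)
Your proof is correct and follows essentially the same approach as the paper: both argue by squeezing $1=\e_1(I)-\e_0(I)+\ell_A(A/I)\geq\binom{k_I}{2}\geq 1$ to force $k_I=2$ and then invoke Proposition~\ref{CM}, with the converse likewise reducing to $r_I=k_I=2$. You are simply more explicit than the paper about justifying the preliminary bound $k_I\geq 2$ and about reading off $\Lambda=\emptyset$ from Proposition~\ref{sum}.
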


\begin{proof}
$(1) \Rightarrow (2)$, $(i)$, $(ii)$, $(iii)$, and $(iv)$:
Because $1=\e_1(I)-\e_0(I)+\ell_A(A/I) \geq \binom{k_I}{2} \geq 1$ we get $k_I=2$ and required conditions by Proposition \ref{CM}.

\noindent
$(2) \Rightarrow (1)$:
Since $r_I=k_I=2$ by our assumption, the required equality $\e_1(I)=\e_0(I)-\ell_A(A/I)+1$ holds true by Proposition \ref{CM}.
\end{proof}

We have the following corollary in the case where the equality $\e_1(I)=\e_0(I)-\ell_A(A/I)+2$ holds true.

\begin{cor}\label{rank2}
Suppose that $I$ is stretched then the following conditions are equivalent:
\begin{itemize}
\item[$(1)$] $\e_1(I)=\e_0(I)-\ell_A(A/I)+2$ and
\item[$(2)$] $\ell_A(I^2/QI)=\ell_A(I^3/QI^2)=1$ and $I^4=QI^3$.
\end{itemize}
When this is the case, the following assertions also hold true:
\begin{itemize}
\item[$(i)$] $\Lambda=\{2\}$,
\item[$(ii)$] $\e_2(I)=3$ if $d \geq 2$, $\e_3(I)=1$ if $d \geq 3$, and $\e_k(I)=0$ for $4 \leq k \leq d$,
\item[$(iii)$] the Hilbert series $ HS_I(z)$ of $I$ is given by {\small { $$ HS_I(z)=\frac{\ell_A(A/I)+\{\e_0(I)-\ell_A(A/I)-1\}z+z^3}{(1-z)^d}, \ \mbox{and}$$}}
\item[$(iv)$] $\depth G=d-1$. 
\end{itemize}
\end{cor}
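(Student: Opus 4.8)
The plan is to run the strategy of Corollary~\ref{rank1} one level higher: first pin down the invariants $k_I$, $r_I$ and $\Lambda$ from the numerical hypothesis, and then read off conditions $(2)$ and $(i)$--$(iv)$ from Corollary~\ref{HM} together with the known structure theorems. For the implication $(1)\Rightarrow(2)$ I would begin by locating $k_I$. Since $\e_1(I)-\e_0(I)+\ell_A(A/I)=2$, the lower bound of Proposition~\ref{CM} forces $\binom{k_I}{2}\le 2$, hence $k_I\le 2$; and $k_I=1$ is impossible because it would give $\ell_A(I^2/QI)=0$, i.e. $I^2=QI$, whence $\e_1(I)=\e_0(I)-\ell_A(A/I)$. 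Thus $k_I=2$ and $\ell_A(I^2/QI)=k_I-1=1$ by Lemma~\ref{length}~$(1)$. Moreover $G$ is not Cohen-Macaulay, for otherwise $r_I=k_I=2$ and Proposition~\ref{CM} would give $\e_1(I)=\e_0(I)-\ell_A(A/I)+1$; since $k_I\le r_I$ always holds for a stretched ideal, $r_I\ge 3$ follows. Finally, because $I$ is stretched we have $Q\cap I^2=QI$, so the hypotheses of \cite[Theorem 4.6]{RV4} hold and $G$ is almost Cohen-Macaulay; by Corollary~\ref{HM} this is precisely the equality case.

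The determination of $r_I$ is then combinatorial. By Proposition~\ref{sum}~$(1)$ we have $|\Lambda|=r_I-k_I=r_I-2$, and $\Lambda\subseteq\{2,3,\dots,r_I-1\}$: indeed $1\notin\Lambda$ since $Q\cap I^2=QI$, while for $n\ge r_I$ we have $I^{n+1}=QI^n\subseteq QI^{n-1}$, so $QI^{n-1}\cap I^{n+1}=QI^n$ and $n\notin\Lambda$. As $\{2,\dots,r_I-1\}$ has exactly $r_I-2$ elements, we conclude $\Lambda=\{2,3,\dots,r_I-1\}$. Substituting into the equality of Corollary~\ref{HM} gives
\[
2=\binom{r_I}{2}-\sum_{s\in\Lambda}s+|\Lambda|=\binom{r_I}{2}-\Big(\binom{r_I}{2}-1\Big)+(r_I-2)=r_I-1 ,
\]
so $r_I=3$ and $\Lambda=\{2\}$, which is $(i)$. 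From $r_I=3$ we obtain $I^4=QI^3$, and from $2\in\Lambda$ together with Lemma~\ref{length}~$(2)$ we get $\ell_A(I^3/QI^2)=\ell_A(I^2/QI)=1$; this is exactly condition $(2)$. The assertions $(ii)$ and $(iii)$ are the Hilbert data attached to the now fully determined invariants $k_I=2$, $r_I=3$, $\Lambda=\{2\}$ of an almost Cohen-Macaulay $G$, which I would extract from Proposition~\ref{functionS}~$(2)$ (or cite \cite[Theorem 1.1]{O3}); a direct check that $h(z)=\ell_A(A/I)+\{\e_0(I)-\ell_A(A/I)-1\}z+z^3$ satisfies $h(1)=\e_0(I)$ and $h'(1)=\e_1(I)$ confirms $(iii)$, and $(iv)$ is immediate since $G$ is almost Cohen-Macaulay but not Cohen-Macaulay.

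For the converse $(2)\Rightarrow(1)$ I would argue as in the proof of Theorem~\ref{main}. Condition $(2)$ gives $k_I=2$ (from $\ell_A(I^2/QI)=1$) and $r_I=3$ (from $I^4=QI^3$ and $I^3\neq QI^2$), hence $|\Lambda|=r_I-k_I=1$ by Proposition~\ref{sum}~$(1)$; combined with $\ell_A(I^3/QI^2)=\ell_A(I^2/QI)$ and Lemma~\ref{length}~$(2)$ this yields $\Lambda=\{2\}$. The always-valid bound of Corollary~\ref{HM} now reads $\e_1(I)\le\e_0(I)-\ell_A(A/I)+\binom{3}{2}-2+1=\e_0(I)-\ell_A(A/I)+2$. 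On the other hand $I^3\neq QI^2$ forces $\e_1(I)\neq\e_0(I)-\ell_A(A/I)+1$ by Corollary~\ref{rank1}, while $\e_1(I)\ge\e_0(I)-\ell_A(A/I)+1$ by Proposition~\ref{CM}; since the Hilbert coefficients are integers, this gives $\e_1(I)\ge\e_0(I)-\ell_A(A/I)+2$. The two inequalities combine to the desired equality.

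The main obstacle is the almost Cohen-Macaulayness used in the forward direction: it is what promotes the (unconditional) Huckaba-Marley inequality of Corollary~\ref{HM} to an equality and thereby lets the combinatorics of $\Lambda$ force $r_I=3$. My primary route is to cite \cite[Theorem 4.6]{RV4}, whose hypotheses $\e_1(I)=\e_0(I)-\ell_A(A/I)+2$ and $Q\cap I^2=QI$ are exactly met. Alternatively one can reprove it internally following Theorem~\ref{main}, by showing $IS^{(2)}=(0)$---here $\ell_{T_\p}(S^{(2)}_\p)=\e_1(I)-\e_0(I)+\ell_A(A/I)-\ell_A(I^2/QI)=1$ by Proposition~\ref{S2}~$(2)$, and the same telescoping of the sequences in Proposition~\ref{functionS}~$(1)$ shows $\ell_{T_\p}(\overline{S^{(2)}_\p})=\sum_{m=-1}^{1}\ell_{T_\p}(K^{(m)}_\p)$, which cannot vanish lest Corollary~\ref{beta} force $2,3\notin\Lambda$ against $|\Lambda|=r_I-2$---and invoking \cite[Theorem 3.2]{R}; this route needs the extra care that for $k_I=2$ the vanishing $IS^{(2)}=(0)$ delivers $I^4\subseteq QI^2$ rather than the containment used for $k_I=3$.
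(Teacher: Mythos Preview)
Your proof is correct and follows essentially the same route as the paper: pin down $k_I=2$ via Proposition~\ref{CM}, obtain almost Cohen--Macaulayness of $G$, then use the equality case of Corollary~\ref{HM} together with $|\Lambda|=r_I-2$ and $\Lambda\subseteq\{2,\dots,r_I-1\}$ to force $r_I=3$. The only noteworthy difference is the source of almost Cohen--Macaulayness: the paper invokes \cite[Theorem~2.1]{RV1} and \cite[Theorem~3.1]{W} (Sally's conjecture, which applies because $k_I=2$), whereas you invoke \cite[Theorem~4.6]{RV4} (which applies because $\e_1(I)-\e_0(I)+\ell_A(A/I)=2$ and $Q\cap I^2=QI$); both are valid and both are mentioned in the paper's introduction, so this is a matter of taste rather than substance.
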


\begin{proof}
$(1) \Rightarrow (2)$, $(i)$, $(ii)$, $(iii)$ and $(iv)$:
Since $2=\e_1(I)-\e_0(I)+\ell_A(A/I) \geq \binom{k_I}{2} > 1$ and by Corollary \ref{rank1}, we have $k_I=2$ $($i.e. $\ell_A(I^2/QI)=1)$ but $I^3 \neq QI^2$ by Corollary \ref{rank1}.
Hence $\depth G \geq d-1$ by \cite[Theorem 2.1]{RV1}, \cite[Theorem 3.1]{W}, and we have $\ell_A(I^3/QI^2)=1$ by Lemma \ref{length}.
Then because $\Lambda \subseteq \{2,3,\cdots,r_I-1\}$ we get
\begin{eqnarray*}
2=\e_1(I)-\e_0(I)+\ell_A(A/I)=\binom{r_I}{2}-\sum_{s \in \Lambda}s +r_I-2 \geq  r_I-1
\end{eqnarray*}
by Corollary \ref{HM}, so that $r_I=3$.

\noindent
$(2) \Rightarrow (1)$:
We notice that, $\e_1(I)>\e_0(I)-\ell_A(A/I)+1$ by Corollary \ref{rank1}.
Therefore $\e_1(I) = \e_0(I)-\ell_A(A/I)+2$ holds true by Corollary \ref{HM}, because $\Lambda=\{2\}$, $k_I=2$, and $r_I=3$.
\end{proof}

We furthermore have the following characterization of stretched ideals with the equality $\e_1(I)=\e_0(I)-\ell_A(A/I)+3$.

\begin{prop}\label{rank3}
Suppose that $I$ is stretched then the following conditions are equivalent:
\begin{itemize}
\item[$(1)$] $\e_1(I)=\e_0(I)-\ell_A(A/I)+3$.
\item[$(2)$] Either of the following conditions holds true.
\begin{itemize}
\item[(I)] $\ell_A(I^2/QI)=2$ and $I^4=QI^3$, or
\item[(II)] $\ell_A(I^{n+1}/QI^n)=1$ for all $1 \leq n \leq 3$ and $I^5=QI^4$.
\end{itemize}
\end{itemize}
When this is the case $G$ is almost Cohen-Macaulay $($i.e. $\depth G \geq d-1)$, and the following assertions also hold true:
\begin{itemize}
\item[$(i)$] Suppose the condition $(I)$ is satisfied. Then, we have $\ell_A(I^3/QI^2)=1$ and $\Lambda=\emptyset$.
Moreover we have $\e_2(I)=4$ if $d \geq 2$, $\e_3(I)=1$ if $d \geq 3$, $\e_k(I)=0$ for $4 \leq k \leq d$, the Hilbert series $ HS_I(z)$ of $I$ is given by {\small { $$ HS_I(z)=\frac{\ell_A(A/I)+\{\e_0(I)-\ell_A(A/I)-2\}z+z^2+z^3}{(1-z)^d}, \ \mbox{and}$$}}
$G$ is Cohen-Macaulay. 
\item[$(ii)$] Suppose the condition $(II)$ is satisfied. Then, we have $\Lambda =\{2,3\}$.
Moreover we have $\e_2(I)=6$ if $d \geq 2$, $\e_3(I)=4$ if $d \geq 3$, $\e_4(I)=1$ if $d \geq 4$, $\e_k(I)=0$ for $5 \leq k \leq d$, the Hilbert series $ HS_I(z)$ of $I$ is given by {\small { $$ HS_I(z)=\frac{\ell_A(A/I)+\{\e_0(I)-\ell_A(A/I)-1\}z+z^4}{(1-z)^d}, \ \mbox{and}$$}}
$\depth G=d-1$. 
\end{itemize}
\end{prop}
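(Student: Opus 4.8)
The plan is to organize the whole argument around the single invariant $k_I=\ell_A(I^2/QI)+1$, reducing almost everything to Proposition \ref{CM}, the classical almost-Cohen-Macaulayness for $k_I=2$, and the Huckaba--Marley equality of Corollary \ref{HM}. The first observation is that the hypothesis $\e_1(I)-\e_0(I)+\ell_A(A/I)=3$ forces $k_I\in\{2,3\}$: the lower bound $\e_1(I)\geq\e_0(I)-\ell_A(A/I)+\binom{k_I}{2}$ of Proposition \ref{CM} gives $\binom{k_I}{2}\leq 3$, hence $k_I\leq 3$, while $k_I=1$ would mean $I^2=QI$ and $\e_1(I)=\e_0(I)-\ell_A(A/I)$, contradicting the hypothesis. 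Since condition $(I)$ is exactly $k_I=3$ and condition $(II)$ forces $k_I=2$, the equivalence in $(2)$ is really the dichotomy $k_I=3$ versus $k_I=2$, and I would prove $(1)\Leftrightarrow(2)$ by treating these two values separately.

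For the branch $k_I=3$ the argument is immediate. Under $(1)$ the value $\e_1(I)-\e_0(I)+\ell_A(A/I)=3=\binom{3}{2}$ attains the lower bound of Proposition \ref{CM}, so that proposition returns $r_I=k_I=3$, that $G$ is Cohen-Macaulay, and the profile $\ell_A(I^{n+1}/QI^n)=3-n$; in particular $\ell_A(I^3/QI^2)=1$ and $I^4=QI^3$, which is $(I)$, while $\Lambda=\emptyset$ because $|\Lambda|=r_I-k_I=0$. The coefficients $\e_2(I)=4,\ \e_3(I)=1$ and the series in assertion $(i)$ are then read off from Proposition \ref{CM}$(ii),(iii)$. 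The converse is equally short: $(I)$ gives $I^4=QI^3$, so $r_I\leq 3$, and since $k_I=3\leq r_I$ we get $r_I=k_I=3$, whence Proposition \ref{CM} yields $\e_1(I)=\e_0(I)-\ell_A(A/I)+\binom{3}{2}$.

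The substantive branch is $k_I=2$, and here I expect not to need the Sally-module machinery of Section 3 at all: since $\ell_A(I^2/QI)=1$, the ring $G$ is automatically almost Cohen-Macaulay by \cite[Theorem 2.1]{RV1} and \cite[Theorem 3.1]{W}, for any value of $\e_1(I)$. Because $\ell_A(I^2/QI)=1$ and the lengths $\ell_A(I^{n+1}/QI^n)$ are non-increasing by Lemma \ref{powers}$(2)$ and positive for $n<r_I$, they all equal $1$ for $1\leq n\leq r_I-1$; together with $1\notin\Lambda$ (from the stretched condition $Q\cap I^2=QI$) this gives $\Lambda\subseteq\{2,\ldots,r_I-1\}$, and since $|\Lambda|=r_I-k_I=r_I-2$ by Proposition \ref{sum}$(1)$ I conclude $\Lambda=\{2,\ldots,r_I-1\}$ with $\sum_{s\in\Lambda}s=\binom{r_I}{2}-1$. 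Inserting this into the equality case of Corollary \ref{HM}, which applies precisely because $G$ is almost Cohen-Macaulay, turns $\e_1(I)-\e_0(I)+\ell_A(A/I)=3$ into $r_I-1=3$, i.e. $r_I=4$; this is $(II)$, and $\Lambda=\{2,3\}$. The converse runs the same bookkeeping in reverse: $\ell_A(I^{n+1}/QI^n)=1$ for $1\leq n\leq 3$ together with $I^5=QI^4$ pins down $k_I=2$, $r_I=4$, $\Lambda=\{2,3\}$, and Corollary \ref{HM} evaluates $\binom{4}{2}-(2+3)+2=3$.

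Finally, the remaining numerical data in assertion $(ii)$ should follow formally. Since $r_I=4\neq 2=k_I$, Proposition \ref{CM} rules out Cohen-Macaulayness, so $\depth G=d-1$ exactly. The coefficients $\e_2(I)=6,\ \e_3(I)=4,\ \e_4(I)=1$ and the stated Hilbert series are then obtained from the structure theorem for stretched ideals with $\depth G=d-1$ (exactly as in the proof of Theorem \ref{main}, via \cite[Theorem 1.1]{O3}), specialized to $r_I=4$, $k_I=2$, $\Lambda=\{2,3\}$; as a consistency check one verifies that the proposed numerator $h(z)=\ell_A(A/I)+(\e_0(I)-\ell_A(A/I)-1)z+z^4$ satisfies $\sum_i h_i=\e_0(I)$ and $\sum_i\binom{i}{k}h_i=\e_k(I)$. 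The only place that requires genuine care is the $k_I=2$ branch, where the identification of $\Lambda$ and the passage to $r_I=4$ rest entirely on the length bookkeeping of Lemma \ref{length} and Proposition \ref{sum} feeding Corollary \ref{HM}; everything else is routine on top of Proposition \ref{CM} and the classical depth result.
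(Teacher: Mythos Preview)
Your proposal is correct and follows essentially the same approach as the paper: split on $k_I\in\{2,3\}$ via Proposition~\ref{CM}, invoke Corollary~\ref{+3} (equivalently Proposition~\ref{CM} at equality) for $k_I=3$, and for $k_I=2$ use the Rossi--Valla/Wang depth bound together with Corollary~\ref{HM}. The only organizational difference is that in the $k_I=2$ branch you determine $\Lambda=\{2,\ldots,r_I-1\}$ exactly from $|\Lambda|=r_I-2$ and then solve $r_I-1=3$ via the \emph{equality} in Corollary~\ref{HM}, whereas the paper first cites Corollaries~\ref{rank1} and~\ref{rank2} to force $r_I\geq 4$ and then uses Corollary~\ref{HM} as an \emph{inequality} to get $r_I\leq 4$; this is a cosmetic streamlining, not a different route.
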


\begin{proof}
$(1) \Rightarrow (2)$, $(i)$, and $(ii)$:
We have $k_I=2$ or 3 because $3=\e_1(I)-\e_0(I)+\ell_A(A/I) \geq \binom{k_I}{2} \geq 1$ by Proposition \ref{CM}.
Therefore, the assertions $(I)$ and $(i)$ are satisfied by Corollary \ref{+3} if $k_I=3$.
Suppose that $k_I=2$ then we have  $\ell_A(I^{n+1}/QI^n)=1$ for all $1 \leq n \leq 3$ by Lemma \ref{length}, Corollary \ref{rank1}, and \ref{rank2}.
Then we have $\depth G \geq d-1$ by \cite[Theorem 2.1]{RV1}, \cite[Theorem 3.1]{W}, and since $\Lambda \subseteq \{2,3,\cdots,r_I-1\}$ we get
\begin{eqnarray*}
3=\e_1(I)-\e_0(I)+\ell_A(A/I)=\binom{r_I}{2}-\sum_{s \in \Lambda}s +r_I-2 \geq  r_I-1
\end{eqnarray*}
by Corollary \ref{HM}, so that $r_I=4$, $\Lambda=\{3,4\}$, and the assertion $(ii)$ is satisfied by \cite[Theorem 2.1]{RV1}.

\noindent
$(2) \Rightarrow (1)$:
Suppose that the assertion $(I)$ is satisfied then $\e_1(I)=\e_0(I)-\ell_A(A/I)+3$ by Corollary \ref{+3}.
Suppose that the assertion $(II)$ is satisfied then we have $\Lambda=\{2,3\}$.
Hence we have $\e_1(I)=\e_0(I)-\ell_A(A/I)+3$ by Corollary \ref{HM}, because $\e_1(I)>\e_0(I)-\ell_A(A/I)+2$ by Corollary \ref{rank1} and \ref{rank2}.
\end{proof}

The main result of this paper is stated as follows.

\begin{thm}\label{rank4}
Suppose that $I$ is stretched then the following conditions are equivalent:
\begin{itemize}
\item[$(1)$] $\e_1(I)=\e_0(I)-\ell_A(A/I)+4$.
\item[$(2)$] Either of the following conditions holds true.
\begin{itemize}
\item[(I)] $\ell_A(I^2/QI)=2$, $\ell_A(I^3/QI^2)=\ell_A(I^4/QI^3)=1$, and $I^5=QI^4$, or
\item[(II)]  $\ell_A(I^{n+1}/QI^n)=1$ for all $1 \leq n \leq 4$ and $I^6=QI^5$.
\end{itemize}
\end{itemize}
When this is the case $\depth G=d-1$, and the following assertions also hold true:
\begin{itemize}
\item[$(i)$] Suppose the condition $(I)$ is satisfied. Then $\Lambda=\{3\}$.
Moreover, we have $\e_2(I)=4$ if $d \geq 2$, $\e_3(I)=1$ if $d \geq 3$, $\e_k(I)=0$ for $4 \leq k \leq d$, and the Hilbert series $ HS_I(z)$ of $I$ is given by {\small { $$ HS_I(z)=\frac{\ell_A(A/I)+\{\e_0(I)-\ell_A(A/I)-2\}z+z^2+z^4}{(1-z)^d}.$$}}
\item[$(ii)$] Suppose the condition $(II)$ is satisfied. Then $\Lambda=\{2,3,4\}$.
Moreover, we have $\e_2(I)=10$ if $d \geq 2$, $\e_3(I)=5$ if $d \geq 3$, $\e_4=5$ if $d \geq 4$, $\e_5(I)=1$ if $d \geq 5$, $\e_k(I)=0$ for $6 \leq k \leq d$, and the Hilbert series $ HS_I(z)$ of $I$ is given by {\small { $$ HS_I(z)=\frac{\ell_A(A/I)+\{\e_0(I)-\ell_A(A/I)-1\}z+z^5}{(1-z)^d}.$$}} 
\end{itemize}
\end{thm}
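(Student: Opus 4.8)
The plan is to run the dichotomy on $k_I$ that drove Proposition \ref{rank3}, now feeding the $k_I=3$ branch into Theorem \ref{main} in place of Corollary \ref{+3}. For $(1)\Rightarrow(2)$ I would first apply Proposition \ref{CM}: since $4=\e_1(I)-\e_0(I)+\ell_A(A/I)\geq\binom{k_I}{2}$ and since $k_I=1$ would force $I^2=QI$ and hence $\e_1(I)=\e_0(I)-\ell_A(A/I)$, one obtains $k_I\in\{2,3\}$, i.e. $\ell_A(I^2/QI)\in\{1,2\}$ by Lemma \ref{length}$(1)$.

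The branch $k_I=3$ is immediate: condition $(\mathrm{I})$ together with assertion $(i)$ is verbatim the conclusion of Theorem \ref{main}, which supplies $\ell_A(I^3/QI^2)=\ell_A(I^4/QI^3)=1$, $I^5=QI^4$, $\Lambda=\{3\}$, the stated Hilbert series, and $\depth G=d-1$. The substantive branch is $k_I=2$. Here Sally's conjecture $($\cite[Theorem 2.1]{RV1}, \cite[Theorem 3.1]{W}$)$ gives $\depth G\geq d-1$, so equality holds in Corollary \ref{HM}. Exactly as in the proof of Corollary \ref{rank2}, the monotonicity in Lemma \ref{powers}$(2)$ forces $\ell_A(I^{n+1}/QI^n)\equiv 1$ on $1\leq n\leq r_I-1$, so Lemma \ref{length}$(2)$ gives $\Lambda=\{2,3,\ldots,r_I-1\}$. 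Feeding $\sum_{s\in\Lambda}s=\binom{r_I}{2}-1$ and $|\Lambda|=r_I-2$ into Corollary \ref{HM} collapses the right-hand side to $\e_1(I)-\e_0(I)+\ell_A(A/I)=r_I-1$, whence $r_I=5$; this is precisely condition $(\mathrm{II})$ with $\Lambda=\{2,3,4\}$. As $r_I=5>2=k_I$, Proposition \ref{CM} rules out Cohen-Macaulayness, so together with $\depth G\geq d-1$ we conclude $\depth G=d-1$.

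For $(2)\Rightarrow(1)$ the two cases mirror the above. In case $(\mathrm{I})$ Theorem \ref{main} returns $\e_1(I)=\e_0(I)-\ell_A(A/I)+4$ directly. In case $(\mathrm{II})$, Lemma \ref{length}$(2)$ reads off $\Lambda=\{2,3,4\}$ and $I^6=QI^5$ gives $r_I=5$; since $\e_1(I)>\e_0(I)-\ell_A(A/I)+3$ by Corollaries \ref{rank1}, \ref{rank2} and Proposition \ref{rank3}, Corollary \ref{HM} forces the value $+4$. The residual Hilbert data in assertion $(ii)$---the coefficients and the series $\frac{\ell_A(A/I)+\{\e_0(I)-\ell_A(A/I)-1\}z+z^5}{(1-z)^d}$---then follow for the $k_I=2$ almost Cohen-Macaulay stretched case from \cite[Theorem 2.1]{RV1} $($or \cite[Theorem 1.1]{O3}$)$. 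The step I expect to be most delicate is not the equivalence but pinning down this explicit series in case $(\mathrm{II})$: one must either invoke the $k_I=2$ structure theorem in exactly the needed form, or run Proposition \ref{functionS}$(2)$ at $r_I=5$ and verify that the hypothesis $\depth G=d-1$ forces every correction term $\ell_A(K^{(m)}_{\bullet})$ to vanish in the relevant degrees---the only genuinely computational point.
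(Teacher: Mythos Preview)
Your proposal is correct and follows essentially the same route as the paper: split on $k_I\in\{2,3\}$ via Proposition \ref{CM}, feed $k_I=3$ into Theorem \ref{main}, and in the $k_I=2$ branch invoke \cite{RV1,W} for $\depth G\geq d-1$ and then Corollary \ref{HM} to pin down $r_I=5$. The only cosmetic difference is that you determine $\Lambda=\{2,\ldots,r_I-1\}$ first from the monotonicity in Lemma \ref{powers}(2) and read off $r_I-1=4$ as an equality, whereas the paper first uses Corollaries \ref{rank1}, \ref{rank2} and Proposition \ref{rank3} to force $r_I\geq 5$ and then bounds $r_I\leq 5$ via the same Huckaba--Marley identity; your version is marginally cleaner but not substantively different.
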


\begin{proof}
$(1) \Rightarrow (2)$, $(i)$, and $(ii)$:
We have $k_I=2$ or 3 because $4=\e_1(I)-\e_0(I)+\ell_A(A/I) \geq \binom{k_I}{2} \geq 1$ by Proposition \ref{CM}.
Therefore, the assertions $(I)$ and $(i)$ are satisfied by Theorem \ref{main} when $k_I=3$.

Suppose that $k_I=2$ then we have  $\ell_A(I^{n+1}/QI^n)=1$ for all $1 \leq n \leq 4$ by Lemma \ref{length}, Corollary \ref{rank1}, \ref{rank2}, and Proposition \ref{rank3}.
Then we have $\depth G \geq d-1$ by \cite[Theorem 2.1]{RV1} and \cite[Theorem 3.1]{W}.
Since $\Lambda \subseteq \{2,3,\cdots,r_I-1\}$ we get
\begin{eqnarray*}
4=\e_1(I)-\e_0(I)+\ell_A(A/I)=\binom{r_I}{2}-\sum_{s \in \Lambda}s +r_I-2 \geq  r_I-1
\end{eqnarray*}
by Corollary \ref{HM}, so that $r_I=5$, $\Lambda=\{3,4, 5\}$, and the assertion $(ii)$ is satisfied by \cite[Theorem 2.1]{RV1}.

\noindent
$(2) \Rightarrow (1)$:
Suppose that the assertion $(I)$ is satisfied then $\e_1(I)=\e_0(I)-\ell_A(A/I)+4$ by Theorem \ref{main}.
Suppose that the assertion $(II)$ is satisfied then we have $\Lambda=\{2,3, 4\}$.
Hence we have $\e_1(I)=\e_0(I)-\ell_A(A/I)+4$ by Corollary \ref{HM}, because $\e_1(I)>\e_0(I)-\ell_A(A/I)+3$ by Corollary \ref{rank1}, \ref{rank2}, and Proposition \ref{rank3}.
\end{proof}

We get the following corollary by the results of this section. 

\begin{cor}\label{cor4}
Suppose that $I$ is stretched and assume that $\e_1(I) \leq \e_0(I)-\ell_A(A/I)+4$, then $G$ is almost Cohen-Macaulay $($i.e. $\depth G \geq d-1)$.
\end{cor}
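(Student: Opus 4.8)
The plan is to reduce the statement to a finite case analysis on the integer $e := \e_1(I) - \e_0(I) + \ell_A(A/I)$, which is bounded above by $4$ by hypothesis. First I would pin down the lower bound $e \geq 1$. Recall from the introduction that the equality $\e_1(I) = \e_0(I) - \ell_A(A/I)$ holds if and only if $I^2 = QI$; but when $I$ is stretched the defining condition $\ell_A(I^2 + Q/I^3 + Q) = 1$ forces $I^2 \not\subseteq Q$, so in particular $I^2 \neq QI$ and therefore $e \geq 1$. This is precisely the Remark opening the present section. Combined with the hypothesis $e \leq 4$, we are left with the four cases $e \in \{1, 2, 3, 4\}$, each of which has already been analyzed above.

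I would then dispatch the cases one at a time, quoting the structure results proved earlier in this section. When $e = 1$, Corollary \ref{rank1} yields that $G$ is Cohen-Macaulay, so in particular $\depth G \geq d - 1$. When $e = 2$, Corollary \ref{rank2} gives $\depth G = d - 1$ directly. When $e = 3$, Proposition \ref{rank3} asserts that $G$ is almost Cohen-Macaulay, that is, $\depth G \geq d - 1$. Finally, when $e = 4$, the main Theorem \ref{rank4} gives $\depth G = d - 1$. In every case $\depth G \geq d - 1$, which is the assertion of the corollary.

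As for obstacles, there is essentially none beyond bookkeeping: the entire content of the corollary is already packaged in the four results cited, and the only genuinely necessary preliminary observation is the lower bound $e \geq 1$, which isolates the admissible range of $e$. The one point to keep track of is that Proposition \ref{rank3} and Theorem \ref{rank4} each split further into two subcases according to the value of $k_I$, equivalently according to whether $\ell_A(I^2/QI)$ equals $1$ or $2$; but since every subcase delivers $\depth G \geq d - 1$, no extra argument is required and the conclusion is uniform across all of them.
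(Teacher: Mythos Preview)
Your proposal is correct and matches the paper's approach exactly: the paper offers no explicit proof of Corollary~\ref{cor4}, stating only that it follows from the results of the section, which amounts precisely to the case analysis on $e \in \{1,2,3,4\}$ that you carry out by invoking Corollary~\ref{rank1}, Corollary~\ref{rank2}, Proposition~\ref{rank3}, and Theorem~\ref{rank4}. Your observation that the lower bound $e \geq 1$ comes from the Remark opening Section~5 is also correct and completes the argument.
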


\section{Example}

In this section we introduce a few examples of one-dimensional stretched Cohen-Macaulay local rings $(A, \m)$.
See \cite[Section 6]{O3} for the detailed proofs.

\vspace{2mm}

Let $b$, $e$ and $\ell$ are integers with $b \geq 2$ and $2 \leq \ell \leq e-1$.
We set $b_1=b$ and $b_n$ $(\ell+1 \leq n \leq e-1)$ be integers with $\lceil \frac{b}{2} \rceil n +1 \leq b_n$ for all $\ell+1 \leq n \leq e-1$, $b_{\ell+1} \leq b\ell+b-1$, and $b_{n+1} \leq b_{n}+\lceil \frac{b}{2} \rceil$ for all $\ell+1 \leq n \leq e-2$ where $\lceil q \rceil =\min\{n \in \Z \ | \ n \geq q \}$ for a quotient number $q \in \mathbb{Q}$.
We notice here that the inequalities $\lceil \frac{b}{2} \rceil n+1 \leq b_n \leq (b-1)n+\ell$ hold true for $\ell+1 \leq n \leq e-1$.
We set $r=\max\{ n < e \ | \ b_n > bn-n+1 \} \cup \{\ell\}$.

Let $H=\langle e, b_1e+1, \{b_ne+n\}_{\ell+1 \leq n \leq e-1} \rangle$ be the numerical semi-group generated by $e$, $b_1e+1$, and $b_{n}e+n$ for $\ell+1 \leq n \leq e-1$.
We set $C=k[H]=k[ u^e, u^{be+1},\{u^{  b_{n}e+n }\}_{\ell+1 \leq n \leq e-1} ] \subseteq k[u]$ be a numerical semi-group ring of $H$, where $k[u]$ denotes the polynomial ring with one indeterminate $u$ over a field $k$.
We set $\q=u^e C$ and $\n$ is the graded maximal ideal of $C$.

Let $A=C_{\n}$ be a localization of $C$ at $\n$ and $\m=\n A$ the maximal ideal of $A$.
We set $Q=\q A$ and then $Q$ is a parameter ideal in $A$ which forms reduction of $\m$.
We then have the following proposition.

\begin{prop}\label{exthm}{$($\cite[Theorem 6.6]{O3}$)$}
The following assertions hold true.
\begin{itemize}
\item[$(1)$] $A$ is a stretched Cohen-Macaulay local ring with $\dim A=1$ and $\tau(A)=\mu_A(\m)-1=e-\ell$.
\item[$(2)$] $k_{\m}=\ell$ and $r_{\m}=r$.
\item[$(3)$] $\Lambda_{\m}=\{bn-b_n+1 \ | \ \ell+1 \leq n \leq r \}.$
\item[$(4)$] $\ell_A(A/\m^{n+1})=e(n+1)-(e-1+\binom{\ell}{2}+\sum_{n=\ell+1}^{r}(b_n-bn+n-1))$ for all $n \geq r-1$.
\item[$(5)$] The following two conditions are equivalent to each other:
\begin{itemize}
\item[$(i)$] ${\rmG}(\m)$ is Cohen-Macaulay and
\item[$(ii)$] $b_{\ell+1} \leq b\ell+b-\ell$ that is the equality $r=\ell$ holds true.
\end{itemize}
\end{itemize}
\end{prop}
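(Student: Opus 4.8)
The strategy is to reduce every assertion to combinatorics of the numerical semigroup $H$ relative to the multiplicity element $e$, through the Ap\'ery set $\mathrm{Ap}(H,e) = \{w_0, w_1, \ldots, w_{e-1}\}$, where $w_j$ is the least element of $H$ with $w_j \equiv j \pmod e$. Since $C = k[H]$ is a one-dimensional Cohen-Macaulay domain, so is its localization $A$, and $Q = (u^e)A$ is a minimal reduction of $\m$ with $\e_0(\m) = \ell_A(A/Q) = e$. The first and decisive step is to pin down $\mathrm{Ap}(H,e)$ itself: I would show from the standing hypotheses on the $b_n$ that $w_0 = 0$, that $w_j = j(be+1) = jbe+j$ for $1 \le j \le \ell$, and that $w_j = b_j e + j$ for $\ell+1 \le j \le e-1$. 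The upper bound $b_n \le (b-1)n+\ell$ ensures that the exhibited generator really is the minimal element of its residue class, while $b_n \ge \lceil b/2 \rceil n + 1$ controls the reverse comparisons needed afterwards.

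With the Ap\'ery set in hand I pass to the order function $\mathrm{ord}(h) = \max\{n : u^h \in \m^n\}$, which for a monomial equals the greatest number of generators occurring in a factorization of $h$ in $H$. A direct length count gives $\mathrm{ord}(w_j) = j$ for $0 \le j \le \ell$ and $\mathrm{ord}(w_j) = 1$ for $\ell+1 \le j \le e-1$, so that $\max_j \mathrm{ord}(w_j) = \ell$. Because $\mathrm{Ap}(H,e) = H \setminus (e+H)$, this yields $\m^{\ell+1} \subseteq Q$ and $\m^\ell \not\subseteq Q$; hence the index of nilpotency, and so $k_\m$, equals $\ell$. Moreover $w_2 = 2(be+1)$ generates $\m^2$ modulo $Q\m$, so $\ell_A((\m^2+Q)/(\m^3+Q)) = 1$ and (the condition $Q \cap \m^2 = Q\m$ being automatic for $I=\m$) the ring is stretched. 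Counting the listed generators gives $\mu_A(\m) = e-\ell+1$, and computing the maximal elements of $\mathrm{Ap}(H,e)$ under the order $a \le_H b \Leftrightarrow b-a \in H$ identifies the pseudo-Frobenius numbers of $H$ as the $e-\ell$ classes $w_\ell, w_{\ell+1}, \ldots, w_{e-1}$, whence $\tau(A) = e-\ell$.

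The technical heart, and the step I expect to be the main obstacle, is the exact determination of the Hilbert function $\ell_A(\m^n/\m^{n+1})$, equivalently of $\mathrm{ord}(w_j+me)$ for all $m \ge 0$. For a residue $j \ge \ell+1$ several factorizations of $(b_j+m)e+j$ compete: the ``trivial'' one $u^{w_j}(u^e)^m$ of length $1+m$, the ``residue-one'' factorization through powers of $u^{be+1}$, which has length $m+(b_j-(b-1)j)$ once $m \ge jb-b_j$, and mixed ones using a single higher generator. Comparing lengths, the residue-one factorization overtakes the trivial one exactly when $b_j > (b-1)j+1$, i.e. $b_j > bj-j+1$, and this is precisely the failure of $\mathrm{ord}(\cdot+e) = \mathrm{ord}(\cdot)+1$ that pushes $\rmG(\m)$ off Cohen-Macaulayness. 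The inequalities $b_{n+1} \le b_n + \lceil b/2 \rceil$ and $\lceil b/2 \rceil \le b-1$ force the key monotonicity: once a residue fails to jump it cannot jump later, so the jumping residues form an initial block $\{\ell+1, \ldots, r\}$ with $r = \max(\{n<e : b_n > bn-n+1\}\cup\{\ell\})$. Bookkeeping the resulting order sets against one another then produces the Hilbert function; reading off where $\ell_A(\m^n/\m^{n+1})$ fails to strictly increase gives $\Lambda_\m = \{bn-b_n+1 : \ell+1 \le n \le r\}$ (a set of size $r-\ell$ because $n \mapsto bn-b_n$ is strictly increasing), and the least $n$ with $\ell_A(\m^n/\m^{n+1}) = e$ gives $r_\m = r$. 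Carrying out this balancing correctly is where the bulk of the effort lies.

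Finally I would assemble the numerical output. Because $\dim A = 1$ we have $\depth \rmG(\m) \ge 0 = d-1$ unconditionally, so the equality case of Corollary \ref{HM} applies and gives $\e_1(\m) = \e_0(\m) - \ell_A(A/\m) + \binom{r}{2} - \sum_{s \in \Lambda}s + |\Lambda|$. Substituting $\e_0(\m) = e$, $\ell_A(A/\m) = 1$ and the descriptions of $\Lambda_\m$ and $r_\m$, a routine rearrangement collapses the right-hand side to $e-1+\binom{\ell}{2}+\sum_{n=\ell+1}^{r}(b_n-bn+n-1)$; since the Hilbert function computed above is linear for $n \ge r-1$, this is exactly assertion (4). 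For the last assertion I invoke Proposition \ref{CM}: $\rmG(\m)$ is Cohen-Macaulay if and only if $r_\m = k_\m$, that is $r = \ell$, that is no residue jumps; by the monotonicity noted above this is in turn equivalent to the single inequality $b_{\ell+1} \le b\ell+b-\ell$, which is the content of (5).
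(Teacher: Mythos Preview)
The paper does not prove this proposition at all: the opening sentence of Section~6 is ``See \cite[Section 6]{O3} for the detailed proofs,'' and Proposition~\ref{exthm} is simply quoted as \cite[Theorem 6.6]{O3}. There is therefore no in-paper argument to compare against.

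Your strategy is the natural (and almost certainly the intended) one for a numerical semigroup ring: compute the Ap\'ery set $\mathrm{Ap}(H,e)$, read off $\mu_A(\m)$, $\tau(A)$ and $k_\m$ from it, and then analyze the order function to obtain the Hilbert function, $r_\m$, and $\Lambda_\m$. Your reductions at the end are correct and efficient: in dimension one the identity $\ell_A(\m^n/\m^{n+1}) = e - \ell_A(\m^{n+1}/Q\m^n)$ shows that the Hilbert function directly encodes both $r_\m$ (as the least $n$ with value $e$) and $\Lambda_\m$ (as the places where it is constant), so that Corollary~\ref{HM} and Proposition~\ref{CM} dispose of (4) and (5) exactly as you say. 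Your monotonicity observation $b(n{+}1)-b_{n+1}\ge b-\lceil b/2\rceil\ge 1$ is also the right reason $|\Lambda_\m|=r-\ell$.

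One caution: in the $\tau(A)$ step, verifying that each of $w_\ell,\dots,w_{e-1}$ is $\le_H$-maximal in the Ap\'ery set requires checking $w_j+w_n\notin\mathrm{Ap}(H,e)$ across several residue ranges, and the single lower bound $b_n\ge\lceil b/2\rceil n+1$ is not by itself sharp enough in all cases; you must combine it with the recursive bound $b_{m+1}\le b_m+\lceil b/2\rceil$ and the initial bound on $b_{\ell+1}$, noting that the extreme cases of these constraints are mutually incompatible for $\ell\ge 2$. This is a genuine case analysis rather than a gap in the approach, and you have already flagged the order-function bookkeeping as the heavy part of the proof.
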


The following Example \ref{ex1} and \ref{ex2} corresponds to Corollary \ref{rank1} and \ref{rank2} respectively.

\begin{ex}\label{ex1}
Let $\ell=2$ and $e \geq 3$ in the above setting, and we set $b_{3}=3b-2$.
Then we have the following.
\begin{itemize}
\item[$(1)$] $k_{\m}=r_{\m}=2$.
\item[$(2)$] $\Lambda_{\m}=\emptyset$.
\item[$(3)$] $\ell_A(A/\m^{n+1})=e(n+1)-e$ for all $n \geq 1$. Therefore $\e_1(\m)=\e_0(\m)$.
\item[$(4)$] $\rmG(\m)$ is Cohen-Macaulay.
\end{itemize}
\end{ex}

\begin{ex}\label{ex2}
Let $\ell=2$ and $e \geq 4$ in the above setting, and we set $b_{3}=3b-1$ and $b_4=4b-3$.
Then we have the following.
\begin{itemize}
\item[$(1)$] $k_{\m}=2$ and $r_{\m}=3$.
\item[$(2)$] $\Lambda_{\m}=\{2\}$.
\item[$(3)$] $\ell_A(A/\m^{n+1})=e(n+1)-(e+1)$ for all $n \geq 2$. Therefore $\e_1(\m)=\e_0(\m)+1$.
\item[$(4)$] $\rmG(\m)$ is not Cohen-Macaulay.
\end{itemize}
\end{ex}

The following Example \ref{ex3-1} and \ref{ex3-2} satisfies the condition of Proposition \ref{rank3} $(I)$ and $(II)$ respectively.

\begin{ex}\label{ex3-1}
Let $\ell=3$ and $e \geq 5$ in the above setting, and we set $b_{4}=4b-3$.
Then we have the following.
\begin{itemize}
\item[$(1)$] $k_{\m}=r_{\m}=3$.
\item[$(2)$] $\Lambda_{\m}=\emptyset$.
\item[$(3)$] $\ell_A(A/\m^{n+1})=e(n+1)-(e+2)$ for all $n \geq 2$. Therefore $\e_1(\m)=\e_0(\m)+2$.
\item[$(4)$] $\rmG(\m)$ is Cohen-Macaulay.
\end{itemize}
\end{ex}

\begin{ex}\label{ex3-2}
Let $\ell=2$ and $e \geq 5$ in the above setting, and we set $b_{3}=3b-1$, $b_4=4b-2$, and $b_5=5b-4$.
Then we have the following.
\begin{itemize}
\item[$(1)$] $k_{\m}=2$ and $r_{\m}=4$.
\item[$(2)$] $\Lambda_{\m}=\{2,3\}$.
\item[$(3)$] $\ell_A(A/\m^{n+1})=e(n+1)-(e+2)$ for all $n \geq 3$. Therefore $\e_1(\m)=\e_0(\m)+2$.
\item[$(4)$] $\rmG(\m)$ is not Cohen-Macaulay.
\end{itemize}
\end{ex}

We can construct $1$-dimensional stretched Cohen-Macaulay local ring whose maximal ideal satisfying the condition of Theorem \ref{rank4} as the following Example \ref{ex4-1} and \ref{ex4-2}.

\begin{ex}\label{ex4-1}
Let $\ell=3$ and $e \geq 5$ in the above setting, and we set $b_{4}=4b-2$ and $b_5=5b-4$.
Then we have the following.
\begin{itemize}
\item[$(1)$] $k_{\m}=3$ and $r_{\m}=4$.
\item[$(2)$] $\Lambda_{\m}=\{3\}$.
\item[$(3)$] $\ell_A(A/\m^{n+1})=e(n+1)-(e+3)$ for all $n \geq 3$. Therefore $\e_1(\m)=\e_0(\m)+3$.
\item[$(4)$] $\rmG(\m)$ is not Cohen-Macaulay.
\end{itemize}
\end{ex}

\begin{ex}\label{ex4-2}
Let $\ell=2$ and $e \geq 6$ in the above setting, and we set $b_{3}=3b-1$, $b_4=4b-2$, $b_5=5b-3$, and $b_6=6b-5$.
Then we have the following.
\begin{itemize}
\item[$(1)$] $k_{\m}=2$ and $r_{\m}=5$.
\item[$(2)$] $\Lambda_{\m}=\{2,3,4\}$.
\item[$(3)$] $\ell_A(A/\m^{n+1})=e(n+1)-(e+3)$ for all $n \geq 4$. Therefore $\e_1(\m)=\e_0(\m)+3$.
\item[$(4)$] $\rmG(\m)$ is not Cohen-Macaulay.
\end{itemize}
\end{ex}

In the end of this paper, let us introduce some concrete examples of stretched local rings whose maximal ideal, respectively, satisfy conditions of Corollary \ref{rank1}, \ref{rank2}, Theorem \ref{rank3}, or \ref{rank4} in Section 5.

\begin{ex}
Let $e=6$ and $b=2$.
Then the following assertions hold true.
\begin{itemize}
\item[$(1)$] Let $\ell=2$, $b_3=4$, $b_4=5$, and $b_5=6$. 
Then $H=\langle 6,13,27,34,41 \rangle$ and $r=2$.
Therefore, $r_{\m}=k_{\m}=2$, $\Lambda_{\m}=\emptyset$,
$\ell_A(A/\m^{n+1})=6(n+1)-6$ for all $n \geq 2$, and $\rmG(\m)$ is Cohen-Macaulay.
\item[$(2)$] Let $\ell=2$, $b_3=b_4=5$, and $b_5=6$.
Then $H=\langle 6,13,33, 34,41 \rangle$ and $r=3$.
Therefore, $r_{\m}=3$, $k_{\m}=2$, $\Lambda_{\m}=\{2\}$, $\ell_A(A/\m^{n+1})=6(n+1)-7$ for all $n \geq 2$, and ${\rmG}(\m)$ is not Cohen-Macaulay.
\item[$(3)$] Let $\ell=3$, $b_4=5$, and $b_5=6$.
Then $H=\langle 6,13,34, 41 \rangle$ and $r=3$.
Therefore, $r_{\m}=k_{\m}=3$, $\Lambda_{\m}=\emptyset$, $\ell_A(A/\m^{n+1})=6(n+1)-8$ for all $n \geq 2$, and $\rmG(\m)$ is Cohen-Macaulay.
\item[$(4)$] Let $\ell=2$, $b_3=5$, and $b_4=b_5=6$.
Then $H=\langle 6,13,33,40, 41 \rangle$ and $r=4$.
Therefore, $r_{\m}=4$, $k_{\m}=2$, $\Lambda_{\m}=\{2,3\}$, $\ell_A(A/\m^{n+1})=6(n+1)-8$ for all $n \geq 3$, and $\rmG(\m)$ is not Cohen-Macaulay.
\item[$(5)$] Let $\ell=3$ and $b_4=b_5=6$.
Then $H=\langle 6,13,40,41 \rangle$ and $r=4$.
Therefore, $r_{\m}=4$, $k_{\m}=3$, $\Lambda_{\m}=\{3\}$, $\ell_A(A/\m^{n+1})=6(n+1)-9$ for all $n \geq 3$, and $\rmG(\m)$ is not Cohen-Macaulay.
\item[$(6)$] Let $\ell=2$, $b_3=5$, $b_4=6$, and $b_5=b_6=7$.
Then $H=\langle 6,13,33,40, 47,48 \rangle$ and $r=5$.
Therefore, $r_{\m}=5$, $k_{\m}=2$, $\Lambda_{\m}=\{2,3,4\}$, $\ell_A(A/\m^{n+1})=6(n+1)-9$ for all $n \geq 4$, and $\rmG(\m)$ is not Cohen-Macaulay.
\end{itemize}
\end{ex}



\end{document}